\documentclass[10pt]{article}
\usepackage{geometry}
\usepackage{hyperref,version}
\usepackage[T1]{fontenc}
\usepackage[utf8]{inputenc}
\usepackage[american]{babel}
\usepackage{amssymb}
\usepackage{amsmath,amsthm}
\numberwithin{equation}{section}
\usepackage{graphicx}
\usepackage{placeins}
\usepackage{xspace}
\usepackage{bm,bbm}
\usepackage{centernot}
\usepackage[numbers]{natbib}
\usepackage{enumerate,array}
\usepackage{xcolor}
\usepackage{dsfont}
\usepackage{enumitem}
\setlist{leftmargin=20pt}
\usepackage{Macros/mydef}
\usepackage{multirow}
\usepackage{diagbox}
\usepackage{lineno}
\usepackage{soul}
\usepackage{fullpage}

\usepackage{algorithm}
\usepackage{algpseudocode}
\usepackage{subcaption}
\usepackage{comment}
\usepackage{cancel}

\begin{document}
\newcommand\footnotemarkfromtitle[1]{%
\renewcommand{\thefootnote}{\fnsymbol{footnote}}%
\footnotemark[#1]%
\renewcommand{\thefootnote}{\arabic{footnote}}}

\title{A high-order nodally bound-preserving and mass-conservative method for linear fourth-order elliptic problems and its applications to nonlinear parabolic equations }

\author{Jie Shen\footnotemark[1], Zuodong Wang\footnotemark[1]}%

\date{\today}

\maketitle

\renewcommand{\thefootnote}{\fnsymbol{footnote}}

\footnotetext[1]{School of Mathematical Sciences; Eastern Institute of Technology, Ningbo, Zhejiang, 315200, China}%

\renewcommand{\thefootnote}{\arabic{footnote}}
\begin{abstract}
We propose a high-order finite element method for linear fourth-order elliptic problems that is both nodally bound-preserving and mass-conservative, based on a variational inequality formulation. The method admits an equivalent strictly convex minimization structure, which ensures well-posedness and enables an optimal error estimate in the $H^1$-seminorm, under suitable regularity assumptions.

This framework is further extended to nonlinear fourth-order parabolic problems through space--time high-order discretizations that combine variational inequalities, BDF schemes, and scalar auxiliary variable (SAV) techniques. The fully discrete schemes preserve nodal bounds and mass, and a modified energy stability result is established for the first-order temporal scheme. We also apply the same framework to nonlinear second-order parabolic problems by introducing a consistent fourth-order regularization, leading to space--time high-order schemes with the same bound-preserving and mass-conservative properties.

Extensive numerical results, including challenging tests with singularities and low regularity, demonstrate the stability, efficiency, and high-order accuracy of the proposed methods.

\medskip
\noindent \textbf{Keywords.} 
Fourth-order problems; nonlinear parabolic equations; bound-preservation; mass-conservation; variational inequalities; finite element methods.

\medskip
\noindent \textbf{MSC.} 65M12, 65M60, 65N12

\end{abstract}


\section{Introduction}
Fourth-order partial differential equations arise in a wide range of applications in physics and engineering, especially in materials science and fluid dynamics \cite{Anderson_McFadden_Wheeler_1998,Cahn_Hilliard_1958,Elder_Katakowski_Haataja_Grant_2002,Gurtin_Polignone_Vinals_1996,Leslie_1979,Yue_Feng_Liu_Shen_2004}. These models often exhibit intrinsic structural properties such as pointwise bound-preservation, mass-conservation, and energy dissipation, which are essential for physical fidelity. Designing high-order numerical schemes that preserve these structural properties remains a significant challenge.

In this work, we consider the following prototypical fourth-order elliptic system posed on a Lipschitz domain $\Dom\subset\Real^d$, $d\in\{1,2,3\}$:
\begin{equation}\label{eq_model_stationary}
    u = \DIV(M \nabla w ) + f_1, \qquad w=-\DIV(\kappa \nabla u) + f_2,
\end{equation}
where $f_1\in L^2(\Dom)$ with $\frac{1}{|\Dom|}\int_\Dom f_1 \in [a,b]$ and $-\infty<a<b<\infty$, 
$f_2\in L^2(\Dom)$ with $\int_\Dom f_2=0$\footnote{$\frac{1}{|\Dom|}\int_\Dom f_1 \in [a,b]$ is a necessary condition for $u$ to be bound-preserving. The mean-free constraint is used only in the error analysis. This does not affect $u$ due to invariance under constant shifts in $w$ with the boundary conditions considered.}, and $M,\kappa\in L^\infty(\Dom;\Real^+)$ are uniformly bounded from below and above. Homogeneous Neumann boundary conditions are imposed for both $u$ and $w$. 
A fundamental property of the solution in \eqref{eq_model_stationary} are the mass-conservation
\begin{equation}
    \int_\Dom u =\int_\Dom f_1,
\end{equation}
which follows from the first equation in \eqref{eq_model_stationary} and the homogeneous Neumann boundary condition. In applications considered here, admissible solutions are
required to satisfy the bound constraint
\begin{equation}
u(\xcoord)\in[a,b] \quad \text{a.e. in } \Dom.
\label{eq:mass-conservation}
\end{equation}
We further replace
the second equation in \eqref{eq_model_stationary} by a variational inequality over the closed convex set of
admissible functions. If a sufficiently regular solution of the unconstrained
system is known a priori to satisfy \(a\le u(\xcoord)\le b\) and the corresponding
complementarity condition, then it is also a solution of the variational
inequality formulation.

Mass-conservation and bound-preservation are often required at the discrete level, to ensure physical admissibility and stability, specifically for time-semidiscretized PDEs; see, e.g., \cite{Cahn_Hilliard_1958, Bertozzi_Pugh_1996, Zhornitskaya_Bertozzi_2000,Shen_Xu_Yang_2019}.

In addition to \eqref{eq_model_stationary}, we consider two classes of nonlinear parabolic problems. The first class consists of fourth-order problems:
\begin{equation}
\partial_t u = \nabla\cdot(M\nabla w),\quad
w = -\Delta u + f(u),
\label{eq:ch-general}
\end{equation}
including the special case
\begin{equation}
\partial_t u = \nabla\cdot(M\nabla w),\quad
w = -\Delta u,
\label{eq:ch-linear}
\end{equation}
with $u_0\in H^1(\Dom;[a,b])$. Here $f\in \calC^0([a,b];\Real)$ is the derivative of a potential $F\in \calC^1([a,b];\Real)$, and $M$ is uniformly positive. Solutions typically satisfy bound-preservation, mass-conservation, and energy dissipation $E(u(t))\le E(u_0)$, where
\begin{equation}
E(v) \eqq (\frac12\|\nabla v\|^2) + (\int_D F(v)+C_0)\eqq E_0(v) + E_1(v),
\label{eq:energy}
\end{equation}
with $C_0>0$ a constant ensuring the positivity of $E_1$.
Typical examples include Cahn--Hilliard models \cite{Cahn_Hilliard_1958} and lubrication-type equations \cite{Zhornitskaya_Bertozzi_2000}.
We also consider nonlinear second-order parabolic problems
\begin{equation}
\partial_t u = \nabla\cdot(K\nabla u),
\label{eq:second-order-parabolic}
\end{equation}
with $u_0\in L^2(\Dom;[a,b])$ and $K\in L^\infty([a,b]\times \Dom\times [0,T];\Real^+)$ uniformly bounded from below and above.
Their solutions satisfy bound-preservation, mass-conservation, and energy dissipation. Typical examples are porous medium equations \cite{Vazquez_2007}.

Over the past decades, many numerical schemes have been developed to preserve bounds and mass for second-order elliptic problems within the finite element framework \cite{Ern_Guermond_FEM1_2021}. In low-order settings, discrete maximum principles or bound-preserving properties can be achieved under suitable mesh conditions \cite{Barrenechea_John_Knobloch_2024}. Variational inequality formulations enable high-order bound-preserving schemes \cite{Barrenechea_Georgoulis_Pryer_2023,Kirby_Shapero_2024}, together with efficient solvers such as primal--dual active set methods \cite{Hintermuller_Ito_Kunisch_2002}. Optimal convergence results have recently been established and extended to other model problems and spatial discretizations in \cite{Barrenechea_Georgoulis_Pryer_2023,Amiri_Barrenechea_Pryer_2024,Amiri_Barrenechea_Pryer_2025,Dong_Ern_Wang_2025,Barrenechea_Pryer_Trenam_2025}. However, these approaches generally do not ensure mass-conservation. This limitation has been addressed in \cite{Barrenechea_Philip_Andreas_2025} via low-order enriched Galerkin methods, and by postprocessing techniques such as optimization-based approaches \cite{Liu_Riviere_Shen_Zhang_2024} and mass redistribution \cite{Guermond_Wang_2025}.
However, for fourth-order elliptic problems, high-order methods that simultaneously preserve bounds and mass remain scarce. \cite{Blank_Butz_Garcke_2011} and references therein apply the lowest-order variational inequality techniques to ensure the bound-preservation and mass-conservation, but rigorous convergence analysis is not given. For simplicity, the term \emph{bound-preservation} will hereafter refer to nodal bound-preservation in the finite element setting.

For parabolic problems, structure-preserving schemes have been extensively studied in the time-semidiscrete setting, especially for energy stability. Representative approaches include convex splitting \cite{Elliott_Stuart_1993}, linear stabilization \cite{Shen_Yang_2010}, invariant energy quadratization \cite{Yang_2016}, scalar auxiliary variable (SAV) methods \cite{Shen_Xu_Yang_2019}, generalized SAV methods \cite{Huang_Shen_2022}, and Lagrange multiplier techniques \cite{Cheng_Liu_Shen_2020}. While these methods ensure energy stability, they generally do not enforce bounds or mass-conservation simultaneously. Lagrange multiplier approaches \cite{Cheng_Shen_2022_1,Cheng_Shen_2022} preserve bounds and mass but may lose energy stability, whereas other methods preserve bounds and energy stability \cite{Cheng_Wang_Zhao_2025,Huang_Shen_2021}.

To the best of our knowledge, for fourth-order elliptic problems, there is no existing high-order method that preserves bounds and mass; for nonlinear parabolic problems, no existing method simultaneously preserves bounds, mass, and energy dissipation in the high-order FEM setting.
Motivated by these gaps, we develop a variational inequality framework for elliptic and parabolic model problems introduced above. For linear fourth-order elliptic equations, we design a high-order finite element scheme that preserves bounds and mass, and analyze the proposed scheme by considering the associated convex minimization problem posed in $H^{-1}$-$H^1$-norms. For nonlinear fourth-order parabolic problems, we combine the elliptic framework with BDF temporal discretizations and SAV techniques to construct high-order schemes that preserve bounds and mass, and a modified energy stability is satisfied for first-order temporal discretization and arbitrarily high-order spatial discretization. We further extend the approach to nonlinear second-order problems through a consistent fourth-order regularization.

The main contributions of this work are summarized as follows:
\begin{itemize}
  \item \textbf{Linear fourth-order elliptic problems.} We propose high-order variational inequality formulations that yield finite element schemes preserving both bounds and mass. The discrete problem admits an equivalent strictly convex minimization formulation in the $H^{-1}$-$H^1$-norms, which ensures well-posedness and enables the derivation of an optimal Céa-type $H^1$-error estimate. The optimal convergence rate is later derived by constructing a bound-preserving and mass-conservative counterpart of the quasi-interpolation proposed in \cite{Ern_Guermond_2017}.

  \item \textbf{Nonlinear fourth-order parabolic problems.} We develop space--time high-order schemes by combining the variational inequality framework with BDF temporal discretizations and SAV techniques. The resulting methods preserve bounds, mass. The well-posedness is established through the corresponding convex minimization problem. For first-order temporal discretization, a modified energy stability is also established.

  \item \textbf{Nonlinear second-order parabolic problems.} We introduce a consistent fourth-order regularization strategy that allows second-order problems to be treated within the fourth-order parabolic framework, leading to schemes that preserve bounds and mass. While a complete well-posedness theory is not yet available, the stability and accuracy of the approach are strongly supported by numerical evidence.
\end{itemize}

The paper is organized as follows. Section~\ref{Sec:linear_elliptic} presents the elliptic scheme and its analysis. Section~\ref{Sec:application_parabolic} develops the parabolic schemes. Numerical results are given in Section~\ref{sec:numerics}, followed by concluding remarks in Section~\ref{sec:conclusions}.

Throughout the paper, we use standard notation for Sobolev spaces. The dual of $H^1(\Dom)$ is denoted by $H^{-1}(\Dom)$, and the mean-free subspace of $L^2(\Dom)$ by $L^2_0(\Dom)$. The $L^2(\Dom)$-inner product is written $(\cdot,\cdot)$ with associated norm $\norm{\cdot}$; we omit explicit space--time dependencies when no confusion can arise (e.g. $\|\cdot\|_{L^2(L^2)}:=\|\cdot\|_{L^2((0,T];L^2(\Dom))}$). Generic constants $C>0$ may change from line to line but are always independent of the mesh size and time step. For brevity, spatial and temporal dependencies of coefficients are often suppressed when clear from context (e.g. $M(u)=M(u,x,t)$, $\kappa=\kappa(x,t)$).

\section{A mass-conservative and bound-preserving scheme for linear fourth-order elliptic problems}\label{Sec:linear_elliptic}
In this section, we develop a high-order finite element discretization of \eqref{eq_model_stationary} based on a variational inequality formulation. The main objectives are to enforce bound-preservation and mass-conservation at the discrete level, and to establish well-posedness and an optimal error estimate in the $H^1$-seminorm.

\subsection{Spatial discrete setting and numerical scheme}

For the spatial discretization, we consider either a simplicial or a tensor-product conforming mesh $\{\calT_h\}_{h>0}$ of $\Dom$, with mesh size parameter $h\eqq \max_{K\in\calT_h} \text{diam}(K)$. 
A generic mesh cell is denoted by $K\in \calT_h$. We assume only that the mesh is shape-regular, rather than the \emph{acute angle} assumption which guarantees the discrete maximum principle in the literature (see for example \cite{Barrenechea_John_Knobloch_2024}). We focus on continuous Lagrange finite elements of order $p\geq 1$. Thus, each degree of freedom (dof) $i$ corresponds to 
the value at the mesh node $\xcoord_i$ for all $i\in\DoFSet\eqq \intset{1}{I}$, where $I$ is the total number of mesh nodes. The global shape functions are
denoted by $\{\varphi_i\}_{i \in \DoFSet}$. The corresponding $H^1$-conforming FEM space is denoted by 
\begin{equation}
    V_h^p\eqq\text{span}\{\varphi_i\}_{i \in \DoFSet}.
\end{equation}
We define $V_h^{p,+}\subset V_h^p$ as the subset of bound-preserving functions (in the nodal sense) in $V_h^p$, \ie
\begin{equation}
    V_h^{p,+}\eqq \Big\{ v_h\eqq \sum_{i\in\DoFSet}V_i\varphi_i \in V_h^p \: |\:  V_i \in [a,b], \ \forall i \in \DoFSet \Big\}.
\end{equation}

We now construct a high-order,  bound-preserving and mass-conservative numerical scheme for the problem \eqref{eq_model_stationary}. The key idea is to reformulate the PDE as a variational inequality, which naturally incorporates the bound constraint through a convex admissible set. 
Following the argument in \cite[Chapter 3]{Han_2024} and \cite{Blank_Butz_Garcke_2011}, the problem \eqref{eq_model_stationary} with a sufficiently regular solution can be reformulated as finding $(u,w)\in H^1(\Dom)\cap H^2(\Dom;[a,b])\times H^1(\Dom)$ such that
\begin{equation}\label{eq_model_stationary_VI}
    \begin{cases}
        (u, v) + (M\nabla w , \nabla v)  = (f_1,v), \quad \forall v\in H^1(\Dom),\\
        (w,\xi-u) \leq (\kappa\nabla u, \nabla ( \xi-u) ) +(f_2, \xi-u), \quad \forall \xi\in H^1(\Dom;[a,b]).
    \end{cases}
\end{equation}
This inequality can be interpreted as enforcing the bound constraint via projection onto a convex admissible set, leading naturally to a variational inequality formulation.
We then discretize the continuous problem by replacing trial and test functions from $H^1(\Dom)$ and $H^1(\Dom;[a,b])$ to $V_h^p$ and $V_h^{p,+}$, respectively, to get the numerical scheme: find $(u_h,w_h)\in V_h^{p,+} \times V_h^p$ such that
\begin{equation}\label{eq_scheme_stationary}
    \begin{cases}
        (u_h, v_h) + (M\nabla w_h , \nabla v_h) = (f_1,v_h), \quad \forall v_h\in V_h^p,\\
        (w_h,\xi_h-u_h) \leq (\kappa\nabla u_h, \nabla ( \xi_h-u_h) ) +(f_2, \xi_h-u_h), \quad \forall \xi_h\in V_h^{p,+}.
    \end{cases}
\end{equation}

\subsection{Fundamental properties}
In this section, we establish the well-posedness,  bound-preservation and mass-conservation of the proposed scheme \eqref{eq_scheme_stationary}, by considering its associated minimization problem posed in the $H^{-1}$-$H^1$-norms.

We start with notation which will be frequently used throughout this and the next section.  
The weighted Poisson solver $(-\Delta^M)^{-1}:L^2_0(\Dom) \to H^1(\Dom)$, $g \mapsto z\eqq z(g)$ is given by
\begin{equation}\label{eq_weighted_Poisson_stationary}
    (M\nabla z, \nabla v) = (g,v),\qquad \int_\Dom z=0, \qquad \forall v\in H^1(\Dom).
\end{equation}
Based on this operator, we have an integration by parts property in the $H^{-1}$-inner product. We  denote $z_{v}\eqq (-\Delta^M)^{-1}v$ and $z_{w}\eqq (-\Delta^M)^{-1}w$ as the solution of \eqref{eq_weighted_Poisson_stationary} with source $v\in L^2_0(\Dom)$ and $w\in L^2_0(\Dom)$, respectively. Then, the following identity holds true:
\begin{align}
    &\quad (w,(-\Delta^M)^{-1} v) = (M\nabla z_{w}, \nabla (-\Delta^M)^{-1} v  ) = (  M \nabla (-\Delta^M)^{-1} v , \nabla (-\Delta^M)^{-1} w  )\nonumber\\
    & = (M\nabla z_{v}, \nabla (-\Delta^M)^{-1} w  ) = (v,(-\Delta^M)^{-1} w).
\end{align}
Following the argument in the continuous setting above, the weighted discrete Poisson solver $(-\Delta_h^M)^{-1}: L^2_0(\Dom) \to V_h^p$, $g \mapsto z_h$ is defined by
\begin{equation}\label{eq_weighted_Poisson_stationary_discrete}
    (M \nabla z_h, \nabla v_h) = (g,v_h), \qquad \int_\Dom z_h =0, \qquad \forall v_h \in V_h^p.
\end{equation}
We also have an integration by parts property in the discrete $H^{-1}$-inner product: By denoting $z_{h,v}\eqq (-\Delta_h^M)^{-1}v$ and $z_{h,w}\eqq (-\Delta_h^M)^{-1}w$ as the solution of \eqref{eq_weighted_Poisson_stationary_discrete} with source $v\in L^2_0(\Dom)$ and $w\in L^2_0(\Dom)$, we have
\begin{align}
    (w,(-\Delta_h^M)^{-1} v) = (  M \nabla (-\Delta_h^M)^{-1} v , \nabla (-\Delta_h^M)^{-1} w  ) = (v,(-\Delta_h^M)^{-1} w).\label{eq_H_minus_1_IBP}
\end{align}
In addition, we introduce the weighted discrete $H^{-1}$-seminorm for all $v\in L^2_0(\Dom)$ by
\begin{equation}
\norm{v}_{H^{-1}_h}^2\eqq (  M \nabla (-\Delta_h^M)^{-1} v , \nabla (-\Delta_h^M)^{-1} v  ).
\end{equation}

We are now in a position to prove the well-posedness, mass-conservation and  bound-preservation  for the proposed scheme \eqref{eq_scheme_stationary}. 
\begin{lemma}[Well-posedness for the linear elliptic scheme \eqref{eq_scheme_stationary}]\label{lemma:well_posedness_stationary}
    The variational inequality \eqref{eq_scheme_stationary} admits a solution $(u_h,w_h)$. The component
    $u_h$ is unique, bound-preserving, and mass-conservative. Moreover, $u_h$ is the unique minimizer of the following problem:
    \begin{equation}\label{eq_minimization_dis_stationary}
        \min_{v \in Y  } \Big\{  \frac{1}{2}\norm{\sqrt{\kappa}\nabla v}^2 + (f_2,v) + \frac{1}{2} \norm{v - f_1}_{H^{-1}_h}^2 \Big\},
    \end{equation}
    where
    \begin{equation}
        Y=\{v\in V_h^{p,+} \ | \  \int_\Dom v = \int_\Dom f_1\}.
    \end{equation}
    
    In addition, the solution $u_h$ satisfies the following variational inequality for all $\xi_h\in Y$:
    \begin{equation}\label{eq_num_VI_primal_stationary}
        ((-\Delta_h^M)^{-1} ( u_h - f_1) , \xi_h-u_h ) + (\kappa \nabla u_h, \nabla (\xi_h -u_h) ) + (f_2, \xi_h-u_h) \geq 0.
    \end{equation}
\end{lemma}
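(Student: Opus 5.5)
The plan is to start from the minimization problem \eqref{eq_minimization_dis_stationary} and derive everything else from it.

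\emph{Step 1: existence and uniqueness of the minimizer.} The set $Y$ is nonempty: the constant $\bar f\eqq\frac{1}{|\Dom|}\int_\Dom f_1\in[a,b]$ lies in $V_h^{p,+}$ and has the right mass. It is also closed, convex and bounded in the finite-dimensional space $V_h^p$. For $v\in Y$ the function $v-f_1$ is mean-free, so $\norm{v-f_1}_{H^{-1}_h}$ is well defined. The functional $J$ is continuous, so a minimizer exists.

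For strict convexity, take $v_1,v_2\in Y$ and set $e=v_1-v_2$. The quadratic part of $J$ in the direction $e$ is $\norm{\sqrt\kappa\nabla e}^2+\norm{e}_{H^{-1}_h}^2$, since $(-\Delta_h^M)^{-1}$ is linear. If this vanishes, then $\nabla e=0$, so $e$ is constant. Because $\int_\Dom e=0$, we get $e=0$. Hence $J$ is strictly convex on $Y$, and the minimizer $u_h$ is unique.

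\emph{Step 2: optimality condition.} The first-order condition at the minimizer is $J'(u_h)(\xi_h-u_h)\ge0$ for all $\xi_h\in Y$. The derivative of $\frac12\norm{v-f_1}_{H^{-1}_h}^2$ in the mean-free direction $\eta$ is $(M\nabla(-\Delta_h^M)^{-1}(v-f_1),\nabla(-\Delta_h^M)^{-1}\eta)$. By \eqref{eq_H_minus_1_IBP} this equals $((-\Delta_h^M)^{-1}(v-f_1),\eta)$. This gives \eqref{eq_num_VI_primal_stationary}.

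\emph{Step 3: equivalence with the scheme \eqref{eq_scheme_stationary}.} This is the main step.

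First direction: let $(u_h,w_h)$ solve \eqref{eq_scheme_stationary}. Testing the first equation with $v_h=1$ gives $\int u_h=\int f_1$, so $u_h\in Y$. This is mass conservation, and bound preservation holds because $u_h\in V_h^{p,+}$. The first equation then says that $w_h-c=-(-\Delta_h^M)^{-1}(u_h-f_1)$ for a constant $c$, using the definition \eqref{eq_weighted_Poisson_stationary_discrete} with source $f_1-u_h\in L^2_0$. Restricting the inequality to $\xi_h\in Y$ kills the constant, because $\int(\xi_h-u_h)=0$. What remains is exactly \eqref{eq_num_VI_primal_stationary}. By convexity, \eqref{eq_num_VI_primal_stationary} characterizes the unique minimizer, so $u_h$ is unique.

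Converse (existence): take the minimizer $u_h$ and set $\tilde w=-(-\Delta_h^M)^{-1}(u_h-f_1)$. We must find a constant $c$ such that $w_h=\tilde w+c$ satisfies the inequality for all $\xi_h\in V_h^{p,+}$, not only for those in $Y$. The first equation holds for any $c$. Let $g(\xi)$ denote the right side of the inequality minus $(\tilde w,\xi-u_h)$; it is affine in $\xi$. We need
\[
c\int(\xi_h-u_h)\le g(\xi_h)\quad\text{for all }\xi_h\in V_h^{p,+},
\]
knowing only that $g\ge0$ on $Y$. This is a Lagrange multiplier / KKT statement for the linear equality constraint $\int v=\int f_1$ over the polyhedral set $V_h^{p,+}$. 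Since the constraint is affine and the feasible set is polyhedral, no constraint qualification is needed, and a multiplier exists. Concretely, I would consider the convex problem of minimizing $J$ over $V_h^{p,+}$ with the constraint $\int v=\int f_1$. Linear-constraint KKT theory, or a separation argument in $\Real^2$ applied to the convex set $\{(\int(\xi-u_h),g(\xi)):\xi\in V_h^{p,+}\}$, yields $c$. The rest of the proof is routine.
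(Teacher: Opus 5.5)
Your proposal is correct. Steps~1--2 coincide with the paper's Steps~1 and~3; your strict-convexity argument ($\nabla e=0$ together with $\int_\Dom e=0$) is more explicit than the paper's bare assertion. Your equivalence step, however, takes a different route from the paper's.

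The paper lifts the problem to the pair $(u_h,v_h)$ and imposes the discrete Poisson relation $(u_h-f_1,\chi_h)+(M\nabla v_h,\nabla\chi_h)=0$ as a constraint, so that $w_h$ arises as the Lagrange multiplier of that constraint. It then writes the full KKT system with dofwise box multipliers $\mu_\pm$ and shows that the multiplier of the mean-free constraint on $v_h$ vanishes, so $w_h-v_h$ is constant. Finally it proves node by node that the simplified KKT system is equivalent to the inequality in \eqref{eq_scheme_stationary}.

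You instead construct $w_h$ explicitly as $-(-\Delta_h^M)^{-1}(u_h-f_1)$ plus a constant. You keep $V_h^{p,+}$ as a polyhedral set rather than dualizing the box constraints, so you need only a single scalar multiplier for the mass constraint. Both routes rest on the same fact: multipliers exist without a constraint qualification because every constraint is affine. The paper uses this implicitly when it writes down its KKT system.

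What each route buys:
\begin{itemize}
\item Your version is shorter and shows transparently that $w_h$ is determined by $u_h$ only up to that one constant.
\item Your forward direction makes explicit why every solution of the scheme has the same $u_h$: test with $v_h=1$, identify $w_h$ up to a constant, restrict to $\xi_h\in Y$, then use sufficiency of the Euler inequality for the convex functional. The paper leaves this implicit.
\item The paper's version exposes the dofwise complementarity structure of $\mu_\pm$, which is what the primal--dual active set solver operates on.
\end{itemize}

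One caution about the separation-in-$\Real^2$ variant. A bare separation theorem may return a vertical line when the hyperplane $\int_\Dom v=\int_\Dom f_1$ supports $V_h^{p,+}$. An example is $p=1$ with $\frac{1}{|\Dom|}\int_\Dom f_1=a$, where $Y$ is a single point. In that variant you must use that the image set is a polygon, via a vertex or Farkas argument. The linear-constraint KKT/LP-duality route you name first has no such issue.
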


\begin{remark}[$H^{-1}$-$H^1$ pair]
The appearance of the discrete $H^{-1}$ and $H^1$ inner products in \eqref{eq_num_VI_primal_stationary} and \eqref{eq_minimization_dis_stationary} allows the fourth-order problem to be reformulated as a strictly convex minimization problem without $w$. This reformulation avoids terms containing $w-w_h$ in the later error analysis in Theorem \ref{Theorem:error_estimate}. We note that error is hard to control in the original variational inequality \eqref{eq_scheme_stationary}, owing to the inequality constraint and the lack of the Poincaré inequality for $w-w_h$.
\end{remark}

\begin{proof}
    We first establish the well-posedness of the strictly convex minimization problem \eqref{eq_minimization_dis_stationary}. We then show that its associated KKT system is equivalent to the variational inequality \eqref{eq_scheme_stationary}, which yields the well-posedness of \eqref{eq_scheme_stationary}. Finally, we derive \eqref{eq_num_VI_primal_stationary} as the Euler inequality associated with \eqref{eq_minimization_dis_stationary}.
    
    \paragraph*{Step 1: well-posedness of the minimization problem.} We show first that the admissible set $Y$ is nonempty, convex and closed.
    
    \textbf{Nonemptiness.} Since the mean value of $f_1$ is well-defined and assumed to lie in $[a,b]$, the constant function $\frac{1}{|\Dom|}\int_\Dom f_1$ belongs to $V_h^{p,+}$ and satisfies the mass constraint, implying that $Y$ is nonempty.

    \textbf{Convexity.} For any $\theta\in [0,1]$, and any pairs $v_1$ and $v_2$ in $Y$, we have $\theta v_1+(1-\theta)v_2 \in V_h^{p,+}$ and  $\int_\Dom \theta v_1 +(1-\theta)v_2 =\int_\Dom f_1$. Hence all convex combinations of $v_1$ and $v_2$ are also in $Y$, which implies that $Y$ is convex.

    \textbf{Closedness.} Since $V_h^p$ is finite-dimensional, and constraints $V_i \in [a,b]$ are closed in $\Real^I$, $V_h^{p,+}$ is closed.

    Therefore, the minimization problem \eqref{eq_minimization_dis_stationary} is a strictly convex functional subject to a finite dimensional, nonempty, convex and closed feasible set. This implies existence and uniqueness of the minimizer for \eqref{eq_minimization_dis_stationary}.

    \paragraph*{Step 2: well-posedness of the variational inequality \eqref{eq_scheme_stationary}.} Finally, we follow the argument in \cite{Butz_2012} to prove that \eqref{eq_scheme_stationary} admits a unique solution.
    The minimization problem \eqref{eq_minimization_dis_stationary} can be reformulated as the following minimization problem:
    \begin{align*}
        \qquad&\min_{u_h,v_h \in V_h^p} \Big\{  \frac{1}{2}\norm{\sqrt{\kappa}\nabla u_h}^2 + (f_2,u_h) +\frac{1}{2} (M\nabla v_h, \nabla v_h) \Big\}\\
        &s.t.\\
        &\qquad (u_h -f_1,\chi_h)+(M\nabla v_h,\nabla \chi_h) = 0, \quad \forall \chi \in V_h^p,\quad \int_\Dom v_h = 0,\\
        &\qquad a\leq U_i\leq b,\quad\forall i\in \DoFSet, \qquad u_h=\sum_{i\in\DoFSet} U_i\varphi_i.
    \end{align*}
    The corresponding Lagrangian is given by
    \small{
    \begin{equation*}
        \frac{1}{2}\norm{\sqrt{\kappa}\nabla u_h}^2 + (f_2,u_h) +\frac{1}{2} (M\nabla v_h, \nabla v_h) - \Big( (M\nabla v_h,\nabla w_h)
        +(u_h-f_1, w_h) \Big)
        -\kappa_1 \int_\Dom v_h - \langle \mu_+, b-u_h  \rangle - \langle \mu_-, u_h-a  \rangle,
    \end{equation*}
    } 
    where we introduced the Lagrange multipliers $w_h\in V_h^p$ for the first FEM constraint, $\kappa_1\in\Real$ for the mean-free constraint of $v_h$, and $\mu_+=\sum_{i\in\DoFSet}\mu_{+,i}\varphi_i\in V_h^p$, $\mu_-=\sum_{i\in\DoFSet}\mu_{-,i}\varphi_i\in V_h^p$ for the  bound-preserving constraints on $u_h$, respectively. The inner product $\langle \cdot,\cdot \rangle$ is defined as the Euclidean inner product for dofs. 
    The associated KKT-system is given by
    \begin{align*}
        &(M\nabla (w_h-v_h),\nabla \chi_h) =(\kappa_1,\chi_h), \quad\forall \chi_h\in V_h^p,\quad \int_\Dom v_h=0,\\
        &(u_h-f_1,\chi_h) + (M\nabla v_h, \nabla \chi_h)= 0, \quad\forall \chi_h\in V_h^p,\\
        &(w_h,\chi_h)= (\kappa\nabla u_h,\nabla \chi_h) + (f_2,\chi_h) + \langle \mu_+-\mu_- , \chi_h \rangle, \quad\forall \chi_h\in V_h^p,\\
        &\mu_{+,i},\mu_{-,i}\geq 0, \mu_{+,i}(U_i-b)= 0,  \mu_{-,i}(U_i-a)= 0, \quad \forall i \in\DoFSet.
    \end{align*}
    
    Additionally, taking $\chi_h=1$ in the first equation, we obtain $\kappa_1=0$. Thus this identity with the first and the second equations in the above KKT system implies that $v_h$ and $w_h$ only differ by a constant. Therefore, replacing $\nabla v_h$ by $\nabla w_h$, we infer a simplified KKT-system
    \begin{align*}
        &(u_h-f_1,\chi_h) + (M\nabla w_h, \nabla \chi_h)= 0, \quad\forall \chi_h\in V_h^p,\\
        &(w_h,\chi_h)= (\kappa\nabla u_h,\nabla \chi_h) + (f_2,\chi_h) + \langle \mu_+-\mu_- , \chi_h \rangle, \quad\forall \chi_h\in V_h^p,\\
        &\mu_{+,i},\mu_{-,i}\geq 0, \mu_{+,i}(U_i-b)= 0,  \mu_{-,i}(U_i-a)= 0, \quad \forall i \in\DoFSet.
    \end{align*}
    We now show that the second and the third line in the simplified KKT-system is equivalent to
    \begin{equation*}
        (w_h,\xi_h-u_h) \leq (\kappa\nabla u_h,\nabla \xi_h-u_h) + (f_2,\xi_h-u_h) \quad\forall \xi_h\in V_h^{p,+}.
    \end{equation*}
    We prove first that the second and the third line in the simplified KKT-system implies the variational inequality above.
    Indeed, for all $\xi_h\eqq \sum_{i\in\DoFSet} \xi_i \varphi_i \in V_h^{p,+}$, we have 
    \begin{equation*}
        (\mu_{+,i}-\mu_{-,i})(\xi_i-U_i)\leq \mu_{+,i}(b-U_i) - \mu_{-,i}(a-U_i) = 0, \qquad \forall i\in \DoFSet,
    \end{equation*}
    which implies $\langle \mu_+-\mu_- , \xi_h -u_h \rangle\leq 0$. Setting $\chi_h=\xi_h-u_h$ in the second line of the simplified KKT-system and using $\langle \mu_+-\mu_- , \xi_h -u_h \rangle\leq 0$, we obtain the expected implication. 
    To establish the converse implication, we construct Lagrange multipliers $\mu_+$ and $\mu_-$ at the level of dofs. For each $i\in\DoFSet$, we consider admissible perturbations of the form $\xi_h = u_h + \delta \varphi_i\in V_h^{p,+}$, where the sign of $\delta$ is restricted by the bound-preserving constraints and $|\delta|$ is sufficiently small. By testing the variational inequality with such perturbations, we obtain the following dofwise definition:
    \[
    \begin{cases}
    \mu_{+,i}=0,\quad \mu_{-,i}=(\kappa\nabla u_h,\nabla\phi_i)+(f_2,\phi_i)-(w_h,\phi_i), & \text{if } U_i=a,\\[0.4em]
    \mu_{-,i}=0,\quad \mu_{+,i}=(w_h,\phi_i)-(\kappa\nabla u_h,\nabla\phi_i)-(f_2,\phi_i), & \text{if } U_i=b,\\[0.4em]
    \mu_{+,i}=\mu_{-,i}=0, & \text{otherwise.}
    \end{cases}
    \]
    This definition satisfies the complementarity conditions
    \[
    \mu_{+,i},\mu_{-,i} \ge 0,\quad \mu_{+,i}(U_i-b)=0,\quad \mu_{-,i}(U_i-a)=0,
    \]
    together with
    \[
    (w_h,\varphi_i) = (\kappa \nabla u_h,\nabla \varphi_i) + (f_2,\varphi_i) + (\mu_{+,i}-\mu_{-,i}).
    \]
    Summing over all dofs yields the expected representation
    \begin{equation*}
        (w_h,\chi_h)= (\kappa\nabla u_h,\nabla \chi_h) + (f_2,\chi_h) + \langle \mu_+-\mu_- , \chi_h \rangle, \quad\forall \chi_h\in V_h^p,
    \end{equation*}
    and hence the equivalence between the variational inequality and the simplified KKT system. Therefore, the existence and uniqueness of the minimization problem leads to the well-posedness of \eqref{eq_scheme_stationary}.

    \paragraph*{Step 3: Euler inequality for the minimization problem.}
    We apply \eqref{eq_H_minus_1_IBP} to reformulate the minimization problem \eqref{eq_minimization_dis_stationary} as 
    \begin{equation*}
        \min_{v\in Y} \calJ(v),\qquad  \calJ(v)= \frac{1}{2}\norm{\sqrt{\kappa}\nabla v}^2 +(f_2,v) + \frac{1}{2}((-\Delta_h^M)^{-1}(v-f_1),v-f_1).
    \end{equation*}
    Its minimizer $u_h$ satisfies the first-order optimality condition
    \begin{equation*}
        (\frac{\mathrm{d}}{\mathrm{d}v}\calJ(u_h),\xi_h-u_h) \geq 0, \qquad \forall \xi_h\in Y.
    \end{equation*}
    We now explore the explicit formula of $\frac{\mathrm{d}}{\mathrm{d}v}\calJ(u_h)$ in all admissible directions. In a neighborhood of the minimizer $u_h\in Y$, we take any perturbation $u_h+\delta v_h\in Y$, with $\delta\in\Real\setminus\{0\}$ and $v_h\in V_h^p$. Since $\int_\Dom u_h = \int_\Dom u_h+\delta v_h = \int_\Dom f_1$, we must have $\int_\Dom v_h =0$. Therefore, this perturbation with \eqref{eq_H_minus_1_IBP} implies
    \begin{align*}
        \calJ(u_h+\delta v_h) =\calJ(u_h) + \delta \Big( ((-\Delta_h^M)^{-1}(u_h-f_1) + f_2, v_h ) + (\kappa\nabla u_h,\nabla v_h) \Big)+ O(\delta^2).
    \end{align*}
    Since the perturbation is arbitrary, we find from the above calculation the expected variational inequality \eqref{eq_num_VI_primal_stationary}.
    This concludes the proof.
\end{proof}

\begin{remark}[Stampacchia’s theorem]
    For the second-order elliptic variational inequalities, a standard way to establish the well-posedness of the variational inequality is to apply Stampacchia’s Theorem, as in \cite{Barrenechea_Pryer_Trenam_2025}. However, due to the lack of coercivity of the associated operator, Stampacchia’s theorem cannot be directly applied in the present setting, and we bypass this problem by using the minimization problem \eqref{eq_minimization_dis_stationary}.
\end{remark}

\subsection{Error estimate}
In this section, we derive an optimal error estimate in the $H^1$-seminorm, for sufficiently smooth solutions.

In order to derive an optimal error bound of the proposed scheme, we require a mass-conservative and  bound-preserving variant of the quasi-interpolation used in \cite{Ern_Guermond_2017}.
\begin{lemma}[A bound-preserving and mass-conservative map]\label{lemma:BP_MC_interpolation}
    Let $v\in\calC^0(\Bar{\Dom};[a,b])$. Assume that $h < h_0(v)$ with $h_0(v)$ a constant depending on $v$. Then, there is a bound-preserving and mass-conservative $\Pi_h v\in V_h^{p,+}$, satisfying the following estimate
    \begin{equation}
        \norm{\Pi_h v - v}_{L^\infty} \leq C \inf_{\chi_h\in V_h^p} \norm{\chi_h-v}_{L^\infty}.
    \end{equation}
    In addition, for $v\in W^{p,\infty}(\Dom)\cap H^{p+1}(\Dom)$, we have
    \begin{equation}
        \norm{\nabla (\Pi_h v -v)} \leq C h^p.
    \end{equation}
\end{lemma}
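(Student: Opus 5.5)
We construct $\Pi_h v$ in three stages: a bound-preserving quasi-interpolant, a mass correction, and a clipping argument that keeps the correction admissible. The goal is an operator that is stable in $L^\infty$ and reproduces the best-approximation error.

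\textbf{Stage 1: bound-preserving quasi-interpolant.} Let $\calI_h v\in V_h^p$ be the $L^\infty$-stable quasi-interpolation of \cite{Ern_Guermond_2017}. Its nodal values are local averages, so
\[
\norm{\calI_h v - v}_{L^\infty}\le C\inf_{\chi_h\in V_h^p}\norm{\chi_h-v}_{L^\infty}\eqq C\,E_h(v).
\]
We then clip the nodal values, setting
\[
\tilde v_h\eqq\sum_{i\in\DoFSet}\min\{b,\max\{a,(\calI_h v)(\xcoord_i)\}\}\varphi_i .
\]
Since $v(\xcoord_i)\in[a,b]$, clipping only moves a nodal value toward $v(\xcoord_i)$. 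Hence
\[
|\tilde V_i - v(\xcoord_i)|\le |(\calI_h v)(\xcoord_i)-v(\xcoord_i)|\le C E_h(v).
\]
By $L^\infty$-stability of the Lagrange basis, $\norm{\tilde v_h - \calI_h v}_{L^\infty}\le C\max_i|\tilde V_i-(\calI_hv)(\xcoord_i)|\le CE_h(v)$. Therefore $\tilde v_h\in V_h^{p,+}$ and $\norm{\tilde v_h-v}_{L^\infty}\le CE_h(v)$.

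\textbf{Stage 2: mass correction.} The mass defect $m\eqq\int_\Dom(v-\tilde v_h)$ satisfies $|m|\le |\Dom| C E_h(v)$. We add a correction $\delta\,\psi_h$ and set $\Pi_h v\eqq \tilde v_h+\delta\psi_h$ with $\delta\eqq m/\int_\Dom\psi_h$.

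The simplest choice is $\psi_h=1$, i.e.\ shifting all dofs by the same constant $\delta=m/|\Dom|$. This restores mass exactly and costs only $|\delta|\le CE_h(v)$ in $L^\infty$. The danger is that nodes already at (or near) $a$ or $b$ would leave $[a,b]$.

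This is where $h<h_0(v)$ enters. Since $v$ is continuous and not identically equal to $a$ or $b$ (the mean lies in $[a,b]$; the degenerate constant case is trivial), there is a fixed cell-sized region $\omega$ with $a+\eta\le v\le b-\eta$ for some $\eta>0$. We take $\psi_h$ to be the sum of the basis functions $\varphi_i$ whose nodes lie in $\omega$. Then $\int_\Dom\psi_h\ge c>0$ uniformly for small $h$. For $h$ small we also have $CE_h(v)\le \eta/2$ (uniform continuity gives $E_h(v)\to0$), so the perturbed nodal values in $\omega$ remain in $[a,b]$. This yields $\Pi_hv\in V_h^{p,+}$ with $\int_\Dom\Pi_hv=\int_\Dom v$, and
\[
\norm{\Pi_h v-v}_{L^\infty}\le CE_h(v)+|\delta|\,\norm{\psi_h}_{L^\infty}\le CE_h(v).
\]

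\textbf{Stage 3: gradient estimate.} This is the main obstacle, because the clipping is nonlinear and $\delta\psi_h$ has gradient of size $|\delta|/h$. We split
\[
\nabla(\Pi_hv-v)=\nabla(\calI_hv-v)+\nabla(\tilde v_h-\calI_hv)+\delta\nabla\psi_h .
\]
The first term is $O(h^p)$ from the $H^{p+1}$ approximation property.

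For the second term we use an inverse inequality cellwise. On each cell, $\norm{\nabla(\tilde v_h-\calI_hv)}_{L^\infty(K)}\le Ch^{-1}\max_i|\tilde V_i-(\calI_hv)(\xcoord_i)|\le Ch^{-1}E_h(v)$. With $v\in W^{p,\infty}$ we have $E_h(v)\le Ch^{p}$, so this gives only $O(h^{p-1})$. To recover the missing power of $h$ we plan to use $W^{p+1,\infty}$-type local estimates where available; otherwise we would exploit that clipping only occurs at nodes where $|(\calI_hv)(\xcoord_i)-v(\xcoord_i)|$ exceeds the distance of $v(\xcoord_i)$ to the bounds, and localize near the contact set.

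For the third term, $\psi_h$ is a fixed function on the region $\omega$ of size independent of $h$, but it is built from basis functions. To keep $\nabla\psi_h$ bounded we instead take $\psi_h$ to be the Lagrange interpolant of a fixed smooth bump supported in $\omega$. Then $\norm{\nabla\psi_h}\le C$ and this term is $O(|\delta|)=O(h^{p})$ or better.

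We therefore expect the main difficulty to lie in the clipping term in Stage 3. If a sharp bound there fails, we would prove the $H^1$ estimate directly from the $L^\infty$ estimate on each cell via the inverse inequality, combined with the sharper local approximation order $E_h(v)|_K\le Ch^{p+1}$, valid under the stated regularity when one uses $W^{p,\infty}\cap H^{p+1}$ to bound nodal errors.
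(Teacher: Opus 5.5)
Your Stages 1 and 2 are sound, including the switch to the interpolant of a fixed smooth bump so that $\norm{\nabla\psi_h}\le C$. But Stage 3 leaves the decisive term open, so as written the $H^1$-estimate is not proved.

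\textbf{The gap.} For the clipping correction you only obtain $\norm{\nabla(\tilde v_h-\calI_h v)}\le Ch^{-1}E_h(v)=O(h^{p-1})$. None of your three proposed rescues closes this:
\begin{itemize}
\item $W^{p+1,\infty}$ is not among the hypotheses.
\item ``Localizing near the contact set'' is not an argument.
\item The claimed local bound $E_h(v)|_K\le Ch^{p+1}$ is false under $W^{p,\infty}\cap H^{p+1}$. These assumptions give $O(h^p)$ from $W^{p,\infty}$, and from $H^{p+1}$ only $\norm{\calI_h v-v}_{L^\infty(K)}\le Ch^{p+1-d/2}|v|_{H^{p+1}(D_K)}$ on the patch $D_K$ (e.g.\ $h^{p-1/2}$ in $d=3$).
\end{itemize}
The underlying problem is that a nodewise nonlinear correction costs a factor $h^{-1}$ in the gradient.

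\textbf{How the paper avoids it.} The paper never clips. It shifts $\calI_h v$ by the constant $c_1$ to restore mass. Only if some nodal value then leaves $[a,b]$ (possible only when $\norm{v}_{L^\infty}=b$ after centering at $\frac{a+b}{2}$) does it contract globally toward the mean, $\Pi_h v=(1-c_2)\tilde v_h+c_2 m$. Here $|c_2|\le C\norm{\tilde v_h-v}_{L^\infty}\le Ch^p$, using $W^{p,\infty}$. Because the correction is affine with a single global coefficient, $\nabla(\Pi_h v-\tilde v_h)=-c_2\nabla\tilde v_h$. The $H^1$-stability of $\calI_h$ then gives $O(h^p)$ with no inverse inequality.

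\textbf{Two repairs within your construction.}
\begin{enumerate}
\item[(i)] Use the correct form of your last idea. Estimate cellwise: $\norm{\nabla(\tilde v_h-\calI_h v)}_{L^2(K)}\le Ch^{d/2-1}\norm{\calI_h v-v}_{L^\infty(K)}\le Ch^{p}|v|_{H^{p+1}(D_K)}$. Then sum the squares over $K$, using finite overlap of the patches. It is the factor $|K|^{1/2}$ that compensates, not a pointwise $h^{p+1}$.
\item[(ii)] Simpler: drop the clipping altogether. You already used $v(\xcoord_i)\in[a,b]$, so the nodal Lagrange interpolant $\sum_{i\in\DoFSet}v(\xcoord_i)\varphi_i$ lies in $V_h^{p,+}$ automatically. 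It is $L^\infty$-quasi-optimal with constant $1+\Lambda_p$, where $\Lambda_p$ is the Lebesgue constant of the reference element. Its $H^1$-seminorm error is $Ch^p|v|_{H^{p+1}}$, since $p+1>d/2$ for $d\le 3$. Combined with your localized bump correction, this gives a complete proof. Compared with the paper's, it needs neither the normalization $a=-b$ nor the case split on $\norm{v}_{L^\infty}$.
\end{enumerate}
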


\begin{remark}[Regularity assumption on $v$]
    In order to obtain the optimal convergence rate in the $H^1$-seminorm, we assumed $v\in W^{p,\infty}(\Dom)$ to control the error introduced by the mass-conservative and bound-preserving correction on the quasi-interpolation of $v$; $v\in H^{p+1}(\Dom)$ is used to control the standard $H^1$-approximation errors of $v$.
\end{remark}

\begin{proof} 
    We assume that $v$ is not a constant, since the constant case is trivial.  
    We first consider the case $a=-b$, and then generalize it to the case $a<b$ with $a\neq -b$.
    \paragraph*{Case $a=-b$.}
    We consider the quasi-interpolation $\calI_h v$ introduced in \cite{Ern_Guermond_2017}, which is $L^\infty$- and $H^1$-stable, and satisfies the corresponding optimal convergence properties under our regularity assumption (see, for example, \cite[Theorem 22.6 and Corollary 22.8]{Ern_Guermond_FEM1_2021}).
    Then, we construct the map
    \begin{equation*}
        \Tilde{v}_h \eqq \calI_h v + c_1 \eqq \sum_{i\in\DoFSet} \Tilde{V}_i \varphi_i, \qquad c_1\eqq \frac{1}{|\Dom|} \int_\Dom v-\calI_h v.
    \end{equation*}
    It follows that $\Tilde{v}_h$ is mass-conservative and 
    \begin{equation}\label{eq_approx_c1}
        |c_1|\leq \norm{\calI_h v -v}_{L^\infty} \leq C \inf_{\chi_h\in V_h^p} \norm{\chi_h-v}_{L^\infty}.
    \end{equation}
    Since $\norm{\Tilde{v}_h-v}_{L^\infty} \leq \norm{\calI_h v - v}_{L^\infty} + |c_1| \leq C \norm{\calI_h v -v}_{L^\infty}$ and $\norm{\nabla (\Tilde{v}_h-v)} = \norm{\nabla (\calI_h v -v)}$, $\Tilde{v}_h$ inherits the $L^\infty$- and $H^1$-approximation properties of $\calI_h v$.

    In the case $\norm{v}_{L^\infty}<b$, we have
    \begin{equation*}
        \norm{\Tilde{v}_h}_{L^\infty} \leq \norm{\Tilde{v}_h-v}_{L^\infty} + \norm{v}_{L^\infty}\leq \norm{\calI_h v -v}_{L^\infty} + |c_1| + \norm{v}_{L^\infty} \leq C \inf_{\chi_h\in V_h^p} \norm{\chi_h-v}_{L^\infty} + \norm{v}_{L^\infty}.
    \end{equation*}
    Therefore, there exists a constant $h_0(v)$ such that for all $h<h_0(v)$, we have $\norm{\Tilde{v}_h}_{L^\infty} < b$. Hence it is sufficient to set $\Pi_h v = \Tilde{v}_h$ in the case $\norm{v}_{L^\infty}<b$. 
    
    In the case $\norm{v}_{L^\infty}=b$, owing to the mass shifting $c_1$, $\Tilde{v}_h$ is no longer nodally bound-preserving, hence we need one more step to correct it.
    If $\Tilde{V}_i\in[a,b]$ for all $i\in\DoFSet$, we set $\Pi_h v\eqq \Tilde{v}_h$. Otherwise, we denote $m=\frac{1}{|\Dom|}\int_\Dom v$ as the average of $v$, and $c_2\in\Real$ a constant to be determined later. We construct $\Pi_h v$ by
    \begin{equation*}
        \Pi_h v = (1-c_2) \Tilde{v}_h + c_2 m.
    \end{equation*}
    By the construction, $\int_\Dom \Pi_h v - v =0$. Now, in order to make it bound-preserving at every nodal point, we need to carefully choose $c_2$ such that
    \begin{equation*}
        \norm{\Pi_h v}_{L^\infty} \leq (1-c_2) \norm{\Tilde{v}_h}_{L^\infty} + c_2 |m| \leq b.
    \end{equation*}
    This can be achieved by choosing $c_2\in(0,1)$ and $c_2\geq \frac{\norm{\Tilde{v}_h}_{L^\infty}-b}{\norm{\Tilde{v}_h}_{L^\infty}-|m|}$. Hence we set $c_2=\frac{\norm{\Tilde{v}_h}_{L^\infty}-b}{\norm{\Tilde{v}_h}_{L^\infty}-|m|}$. We claim that such $c_2$ is well defined and takes value in $(0,1)$ for sufficiently small $h$. 
    Indeed, we have that $\norm{v}_{L^\infty}=b>|m|$ (the case $b=|m|$ implies $|v|=b$, which is already covered by the constant case). Noticing that $\norm{\Tilde{v}_h}_{L^\infty}\to\norm{v}_{L^\infty}=b>|m|$ as $h\to 0$, there exists a constant $h_0'$ depending on $v$ such that for all $h<h_0'$, $\norm{\Tilde{v}_h}_{L^\infty}-|m|\geq \frac{b-|m|}{2}$. Hence the $L^\infty$-stability and approximation properties of the quasi-interpolation implies that for all $h<h_0'$, 
    \begin{equation}\label{eq_approx_c2}
        |c_2| \leq C \Big| \norm{\Tilde{v}_h}_{L^\infty}-b \Big| \leq C\Big| \norm{\Tilde{v}_h}_{L^\infty} - \norm{v}_{L^\infty} \Big|  \leq C \norm{\Tilde{v}_h-v}_{L^\infty} \leq C\norm{\calI_h v- v}_{L^\infty} \leq C \inf_{\chi_h\in V_h^p} \norm{\chi_h-v}_{L^\infty}.
    \end{equation}
    Therefore, there exists another constant $h_0\leq h_0'$ such that $c_2\in (0,1)$ for all $h<h_0$. 
    This implies that the nodal values satisfy $(\Pi_h v)(\xcoord_i)\in[a,b]$ for all $i\in\DoFSet$, and hence $\Pi_h v\in V_h^{p,+}$.
    
    Now, we apply the $L^\infty$-stability and approximation properties of the quasi-interpolation, \eqref{eq_approx_c1} and \eqref{eq_approx_c2} to find
    \begin{align*}
        \norm{v-\Pi_h v}_{L^\infty} &\leq \norm{v-\calI_h v}_{L^\infty} + \norm{\calI_h v-\Pi_h v}_{L^\infty} \leq \norm{v-\calI_h v}_{L^\infty} + \norm{\calI_h v-\Tilde{v}_h}_{L^\infty} + \norm{\Tilde{v}_h-\Pi_h v}_{L^\infty}\\
        &\leq \norm{v-\calI_h v}_{L^\infty} + |c_1| + |c_2|(\norm{\Tilde{v}_h}_{L^\infty} +|m| ) \leq C \norm{v-\calI_h v}_{L^\infty} \leq C \inf_{\chi_h\in V_h^p} \norm{\chi_h-v}_{L^\infty},
    \end{align*}
    which is the expected convergence property in the $L^\infty$-norm.

    For the $H^1$-seminorm, we use the $H^1$-stability, the optimal approximation property of the quasi-interpolation in the $H^1$-seminorm, the regularity assumption on $v$ and \eqref{eq_approx_c2} to find
    \begin{align*}
        \norm{\nabla(\Pi_h v -v)} &\leq \norm{\nabla (\Pi_h v - \Tilde{v}_h)} +\norm{\nabla (\Tilde{v}_h-v)}\leq |c_2| \norm{\nabla \Tilde{v}_h} + \norm{\nabla (\Tilde{v}_h-v)} \\
        &= |c_2| \norm{\nabla \calI_h v} + \norm{\nabla (\calI_h v-v)} \leq C |c_2| \norm{\nabla v} + \norm{\nabla (\calI_h v-v)} \leq C h^p.
    \end{align*}
    \paragraph*{General case $a<b$ with $a\neq-b$.} The same construction applies for $\Bar{v}=v-\frac{a+b}{2}$ with lower and upper bound $-\Bar{a}=\Bar{b}=\frac{b-a}{2}$, which leads to the optimal construction $\Pi_h \Bar{v}$. The proof is completed by setting $\Pi_h v\eqq \Pi_h \Bar{v} + \frac{a+b}{2}$.
\end{proof}

We recall that, owing to the Poincaré inequality and that $M$ is uniformly bounded from below and above, the following PDE stability estimate holds true with $g\in L^2_0(\Dom)$:
\begin{equation}\label{eq_stability_z}
    \norm{(-\Delta^M)^{-1}g}\leq C\norm{\nabla (-\Delta^M)^{-1}g} \leq C \norm{g}.
\end{equation}
Similarly, owing to $V_h^p\subset H^1(\Dom)$, the following stability estimate holds true for all $g\in L^2_0(\Dom)$:
\begin{equation}\label{eq_stability_zh}
    \norm{(-\Delta_h^M)^{-1}g}\leq C \norm{g}.
\end{equation}

Now, we are ready to prove that $u_h$ in \eqref{eq_scheme_stationary} has the optimal convergence rate in the $H^1$-seminorm by using the variational inequality \eqref{eq_num_VI_primal_stationary} and the bound-preserving and mass-conservative map in Lemma \ref{lemma:BP_MC_interpolation}.

\begin{theorem}[Error estimate]\label{Theorem:error_estimate}
Assume that the domain $\Dom$ has a sufficiently smooth boundary, data are sufficiently regular such that $u\in \calC^0(\Bar{\Dom};[a,b])\cap W^{p,\infty}(\Dom)\cap H^{p+1}(\Dom)$ and $w\in H^{p+1}(\Dom)$. Then, for $h<h_0(u)$ with $h_0(u)$ determined in Lemma \ref{lemma:BP_MC_interpolation}, we have the following estimate for the numerical solution $u_h$ delivered by \eqref{eq_scheme_stationary}:
\begin{equation}
    \norm{\nabla (u-u_h)} \leq C h^p.
\end{equation}
\end{theorem}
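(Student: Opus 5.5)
The plan is a Falk-type argument for variational inequalities, built on the Euler inequality \eqref{eq_num_VI_primal_stationary} and on the continuous counterpart of the same primal structure. First we record what $u$ satisfies. Since $\int_\Dom u=\int_\Dom f_1$, the first equation of \eqref{eq_model_stationary_VI} gives $w-\bar w=(-\Delta^M)^{-1}(f_1-u)$, where $\bar w$ is the mean of $w$. For mass-preserving test functions $\xi\in H^1(\Dom;[a,b])$ with $\int_\Dom\xi=\int_\Dom f_1$, the constant $\bar w$ drops out of the second line. We therefore obtain
\begin{equation*}
((-\Delta^M)^{-1}(u-f_1),\xi-u)+(\kappa\nabla u,\nabla(\xi-u))+(f_2,\xi-u)\ge 0 .
\end{equation*}
Because $u$ is regular, integration by parts also gives the strong identity $(w,\chi)=(\kappa\nabla u,\nabla\chi)+(f_2,\chi)+(\lambda,\chi)$ for all $\chi\in H^1$. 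The residual $\lambda\eqq -\DIV(\kappa\nabla u)+f_2-w\in L^2$ is nonnegative where $u=a$, nonpositive where $u=b$, and zero on the set $\{a<u<b\}$.

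Next we set $\eta_h\eqq\Pi_h u\in Y$ using Lemma~\ref{lemma:BP_MC_interpolation}, which applies because $h<h_0(u)$ and $\Pi_h u$ is mass-conservative. We test \eqref{eq_num_VI_primal_stationary} with $\xi_h=\eta_h$, and write $e_h\eqq u_h-\eta_h$, which has mean zero. This yields
\begin{equation*}
(\kappa\nabla u_h,\nabla e_h)+((-\Delta_h^M)^{-1}(u_h-f_1),e_h)+(f_2,e_h)\le 0 .
\end{equation*}
We subtract the strong identity with $\chi=e_h$. We also replace $w$ by $(-\Delta^M)^{-1}(f_1-u)$, which is allowed because $e_h$ is mean-free. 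Using $(-\Delta_h^M)^{-1}(u_h-f_1)=(-\Delta_h^M)^{-1}(u_h-u)+(-\Delta_h^M)^{-1}(u-f_1)$, we get
\begin{align*}
&(\kappa\nabla(u_h-u),\nabla e_h)+((-\Delta_h^M)^{-1}(u_h-u),e_h)\\
&\qquad\le ((-\Delta^M)^{-1}(u-f_1)-(-\Delta_h^M)^{-1}(u-f_1),e_h)+(\lambda,e_h).
\end{align*}
On the left we insert $u_h-u=e_h+(\eta_h-u)$. By \eqref{eq_H_minus_1_IBP}, the $H^{-1}_h$ term $((-\Delta_h^M)^{-1}e_h,e_h)=\norm{e_h}_{H^{-1}_h}^2\ge0$ has a good sign. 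The cross term $((-\Delta_h^M)^{-1}(\eta_h-u),e_h)$ is bounded using \eqref{eq_stability_zh} and Poincaré by $C\norm{\eta_h-u}\norm{\nabla e_h}$. The coercivity of $\kappa$ then gives $\norm{\nabla e_h}^2$ on the left.

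On the right, the first term is a Galerkin error for the weighted Poisson problem. With $g=u-f_1\in L^2_0$, we have $(-\Delta^M)^{-1}g=-(w-\bar w)$, and the discrete solution is its Ritz projection. By Céa's lemma and $w\in H^{p+1}$, its error is $O(h^p)$ in $H^1$, and hence in $L^2$ by Poincaré; with an Aubin–Nitsche argument the $L^2$ error is even $O(h^{p+1})$. This bounds the term by $Ch^p\norm{\nabla e_h}$.

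The main obstacle is the complementarity term $(\lambda,e_h)$, because $u_h$ is only nodally bound-preserving. We write $(\lambda,e_h)=(\lambda,u_h-u)+(\lambda,u-\eta_h)$. The second piece is at most $\norm{\lambda}\,\norm{u-\Pi_h u}$, which is $O(h^p)$, or better, by Lemma~\ref{lemma:BP_MC_interpolation} together with the $L^\infty$ estimate and Poincaré. For the first piece, sign considerations give $(\lambda,u_h-u)=(\lambda^+,u_h-a)-(\lambda^-,u_h-b)$, using that $\lambda^\pm$ is supported on the contact sets. If $u_h$ were pointwise in $[a,b]$ this would be $\le0$. With only nodal bounds, we plan to compare $u_h$ with its nodal values via the Lagrange interpolant of the clipping, $u_h-\calI^{\rm nod}(\min(\max(u_h,a),b))$. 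Then $(\lambda,u_h-u)\le\norm{\lambda}\,\norm{(u_h-b)^+ + (a-u_h)^-}$. Since $u_h$ equals its nodal interpolant, we intend to bound this violation by $Ch\norm{\nabla(u_h-u)}$ (elementwise, a degree-$p$ polynomial with nodal values in $[a,b]$ overshoots by at most $Ch|\nabla(u_h-u)|$ plus interpolation error of $u$), or alternatively to bound it using $\lambda$'s regularity. In both cases the result is absorbed by Young's inequality, giving $\norm{\nabla e_h}\le Ch^p$. The estimate then follows from $\norm{\nabla(u-u_h)}\le\norm{\nabla(u-\Pi_h u)}+\norm{\nabla e_h}$ and Lemma~\ref{lemma:BP_MC_interpolation}.
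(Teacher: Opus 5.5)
\textbf{Verdict: the core of your argument is the paper's proof, but the one step you add does not give $O(h^p)$ for $p\ge 2$.}

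Everything in your argument that does not involve $\lambda$ matches the paper's Steps 2--4:
\begin{itemize}
\item testing \eqref{eq_num_VI_primal_stationary} with $\xi_h=\Pi_h u\in Y$;
\item the good sign of $\norm{e_h}_{H^{-1}_h}^2$ from \eqref{eq_H_minus_1_IBP};
\item the Ritz error of the weighted Poisson problem, controlled by $w\in H^{p+1}(\Dom)$ (the paper's $\calR_1$);
\item the interpolation cross terms, controlled by Lemma~\ref{lemma:BP_MC_interpolation}, \eqref{eq_stability_zh} and Poincar\'e (the paper's $\calR_2$, $\calR_3$).
\end{itemize}

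The gap is exactly the step you flag as the main obstacle. If $\lambda\not\equiv 0$, two pieces fail to give $h^p$.
\begin{itemize}
\item The piece $(\lambda,u-\Pi_h u)$ enters the energy inequality without a factor $\norm{\nabla e_h}$. So the bound $\norm{\lambda}\,\norm{u-\Pi_h u}=O(h^p)$ only gives $\norm{\nabla e_h}^2\lesssim h^p$, i.e.\ $\norm{\nabla e_h}\lesssim h^{p/2}$. Even an $O(h^{p+1})$ bound would only give $h^{(p+1)/2}$.
\item Even granting your sketched overshoot estimate, the term $\norm{\lambda}\,Ch\norm{\nabla(u_h-u)}$ leaves $Ch^2\norm{\lambda}^2$ after Young's inequality. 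That yields only $\norm{\nabla e_h}\lesssim h$.
\end{itemize}
To reach $h^p$ you would need $(\lambda,u-\Pi_h u)=O(h^{2p})$ and an overshoot bound carrying a factor $h^p$ rather than $h$. Neither is available for $p\ge 2$; this is the classical Falk-type limitation. So your final absorption claim is false in the generality you set up.

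There is also a sign slip. With $\lambda\eqq-\DIV(\kappa\nabla u)+f_2-w$, the strong identity reads $(w,\chi)=(\kappa\nabla u,\nabla\chi)+(f_2,\chi)-(\lambda,\chi)$. Moreover, $(\lambda,u_h-u)\ge 0$, not $\le 0$, when $a\le u_h\le b$ pointwise. The two slips cancel, so your sign conclusion survives.

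\textbf{How the paper avoids this term.} In the paper, $u$ is the solution of the unconstrained system \eqref{eq_model_stationary}, assumed to take values in $[a,b]$. Since $\int_\Dom f_2=0$, $w$ is mean-free, so $w=(-\Delta^M)^{-1}(f_1-u)$. Hence the equality \eqref{eq_model_stationary_primal} holds for all $v\in H^1(\Dom)$; in your notation, $\lambda\equiv 0$. Bound-preservation of $u$ is used only to make $\Pi_h u$ an admissible element of $Y$.

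If you state this setting and delete the complementarity discussion, your proof closes and is essentially the paper's. Your handling of $\bar w$ through the mean-free test function $e_h$ even makes the assumption $\int_\Dom f_2=0$ unnecessary. Proving $O(h^p)$ with genuinely active constraints for $p\ge 2$ would require a different idea, and the theorem does not assert it.
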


\begin{remark}[Assumptions in error analysis]
    The regularity assumptions on $u,w\in H^{p+1}(\Dom)$ are used to derive an optimal error bound in the $H^1$-seminorm, whereas $u\in \calC^0(\Bar{\Dom};[a,b]) \cap W^{p,\infty}(\Dom)$ and $h<h_0(u)$ is used to construct a bound-preserving and mass-conservative element in $Y$ with optimal $H^1$-convergence rate.
\end{remark}

\begin{proof}
The proof follows a standard approach for variational inequalities, combining a suitable projection with stability and consistency estimates.
The whole proof is divided into four steps. In the first step, we introduce the necessary notation and properties of $u$ in \eqref{eq_model_stationary} and $u_h$ in \eqref{eq_scheme_stationary}, and introduce the error decomposition. In the second step, we control the consistency error. In the third step, we use the stability of the numerical scheme to derive the bounds on the rest part of the error decomposition. In the final step, we combine the estimates in the previous steps to conclude the proof. We recall the $M$ and $\kappa$ are assumed to be uniformly bounded from below and above, hence there exists constants $M_1,M_2,\kappa_1,\kappa_2$ such that
\begin{equation*}
    0<M_1 \leq M \leq M_2 <\infty ,\qquad 0<\kappa_1 \leq \kappa \leq \kappa_2 < \infty.
\end{equation*}
These constants will be hidden into generic constant $C$ in this proof.

\paragraph*{Step 1.}
Following the argument in \cite{Butz_2012}, for sufficiently regular solutions, \eqref{eq_model_stationary} can be reformulated as 
\begin{equation}\label{eq_model_stationary_primal}
    ( (-\Delta^M )^{-1}( u - f_1),v) + ( \kappa \nabla u,\nabla v ) +(f_2,v) = 0,\quad \forall v\in H^1(\Dom),
\end{equation}
with the weighted Poisson solver  $(-\Delta^M)^{-1}$ defined in \eqref{eq_weighted_Poisson_stationary}. Indeed, from the first equation in \eqref{eq_model_stationary}, we obtain $ (-\Delta^M )^{-1}( u - f_1)=-w$ up to an additive constant. Then, the second equation in \eqref{eq_model_stationary} with $\int_\Dom f_2=0$ implies $\int_\Dom w=0$. Hence we conclude that $ (-\Delta^M )^{-1}( u - f_1)=-w$. Therefore, inserting $ (-\Delta^M )^{-1}( u - f_1)=-w$ in the second line of \eqref{eq_model_stationary}, we obtain \eqref{eq_model_stationary_primal}.

The numerical solution $u_h\in V_h^{p,+}$ of \eqref{eq_scheme_stationary} satisfies the variational inequality \eqref{eq_num_VI_primal_stationary}:
\begin{equation*}
    ( (-\Delta_h^M)^{-1}( u_h - f_1), \xi_h -u_h ) + (\kappa \nabla u_h, \nabla ( \xi_h-u_h) ) +(f_2, \xi_h-u_h) \geq 0, \quad \forall \xi_h\in Y,
\end{equation*}
with $Y= \{ v_h \in V_h^{p,+} \ | \ \int_\Dom v_h = \int_\Dom f_1 \}$.
By choosing a function $\Pi_h u \in Y$ (since $\int_\Dom u =\int_\Dom f_1$) approximating $u$ as in Lemma \ref{lemma:BP_MC_interpolation}, we decompose the error as
\begin{equation}\label{eq_error_decomposition_statonary}
    e \eqq u_h-u = (u_h -\Pi_h u) + (\Pi_h u -u) \eqq \eta +\zeta. 
\end{equation}
In addition, we introduce a parameter $\epsilon>0$, to be specified later.

\paragraph*{Step 2.}
We now estimate the consistency error. The weak form of the PDE \eqref{eq_model_stationary_primal} yields
\begin{equation}\label{eq_stationary_Piu}
    ( (-\Delta_h^M)^{-1}( \Pi_h u - f_1),v_h) + ( \kappa \nabla \Pi_h u,\nabla v_h ) +(f_2,v_h) = -(\calR,v_h), \quad \forall v_h\in V_h^p, 
\end{equation}
where the consistency term is defined by
\begin{align}
    (\calR,v_h) &= \Big(  [(-\Delta^M)^{-1}-(-\Delta_h^M)^{-1}] (u - f_1 ), v_h \Big) + \Big( (-\Delta_h^M)^{-1} (u-\Pi_h u) , v_h \Big)+ \Big( \kappa \nabla (u - \Pi_h u), \nabla v_h \Big)\nonumber\\
    &\eqq (\calR_1,v_h) + (\calR_2,v_h) + (\calR_3,v_h),
\end{align}
where $\calR_i$, $i=1,...,3$ are defined in the natural way. 

We now estimate $\calR_i$, $i=1,...,3$ one by one.
For $\calR_1$, we set $z_h\eqq (-\Delta_h^M)^{-1}(f_1-u)$ and $z\eqq (-\Delta^M)^{-1}( f_1-u )=w$, as in \eqref{eq_weighted_Poisson_stationary_discrete} and \eqref{eq_weighted_Poisson_stationary}, respectively, to get
\begin{equation*}
    (\calR_1 , v_h ) = (z-z_h,v_h) = (z-z_h,v_h - \frac{1}{|\Dom|}\int_\Dom v_h),
\end{equation*}
where we used that $z-z_h$ has zero mean and is therefore orthogonal to constants. Then, we use the Poincaré inequality, the optimal approximation properties of the elliptic projection \eqref{eq_weighted_Poisson_stationary_discrete} with assumed regularity on $M$ (see, for example \cite{Thomee_2006}), the Cauchy-Schwarz and Young's inequalities to infer
\begin{align*}
     (z-z_h,v_h - \frac{1}{|\Dom|}\int_\Dom v_h) & \leq  C\norm{\nabla (z-z_h)}\norm{\nabla v_h} \leq C\inf_{\chi_h\in V_h^p} \norm{\nabla (\chi_h-z)} \norm{\nabla v_h} \\
     &\leq \frac{C}{\epsilon} \inf_{\chi_h\in V_h^p} \norm{\nabla (\chi_h-z)}^2 + \frac{\epsilon}{3} \norm{\nabla v_h}^2.
\end{align*}
Moreover, invoking the regularity assumption in $w=z\in H^{p+1}(\Dom)$, the approximation theory leads to
\begin{equation*}
    |(\calR_1 , v_h )| \leq \frac{C}{\epsilon} h^{2p} + \frac{\epsilon}{3} \norm{\nabla v_h}^2.
\end{equation*}

For $\calR_2$, we recall that $(-\Delta_h^M)^{-1}$ maps mean-free functions to mean-free functions, use the stability \eqref{eq_stability_zh}, and apply the Poincaré, the Cauchy-Schwarz and Young's inequalities to get
\begin{align*}
    ( (-\Delta_h^M)^{-1}(u - \Pi_h u), v_h ) &= ( (-\Delta_h^M)^{-1} (u-\Pi_h u) , v_h - \frac{1}{|\Dom|}\int_\Dom v_h) \\
    & \leq C \norm{\nabla (u-\Pi_h u)} \norm{\nabla v_h} \leq \frac{C}{\epsilon} \norm{\nabla(u-\Pi_h u)}^2 + \frac{\epsilon}{3} \norm{\nabla v_h}^2.
\end{align*}
Then, we use the approximation properties of $\Pi_h u$ in Lemma \ref{lemma:BP_MC_interpolation} and the regularity assumption on $u$ to find
\begin{equation*}
    |(\calR_2,v_h)| \leq \frac{C}{\epsilon}h^{2p} +\frac{\epsilon}{3}\norm{\nabla v_h}^2.
\end{equation*}

For $\calR_3$, we apply the Cauchy-Schwarz and Young's inequalities to find 
\begin{equation*}
    (\kappa\nabla ( u-\Pi_h u ), \nabla v_h ) \leq \frac{C}{\epsilon} \norm{\nabla (u-\Pi_h u)}^2 +\frac{\epsilon}{3} \norm{\nabla v_h}^2.
\end{equation*}
Hence the PDE regularity assumption and the approximation properties of $\Pi_h u$ implies
\begin{equation*}
    |(\calR_3,v_h)|\leq \frac{C}{\epsilon} h^{2p} + \frac{\epsilon}{3}\norm{\nabla v_h}^2.
\end{equation*}

Therefore, collecting the above estimates on $\calR_i$, $i=1,2,3$, it follows
\begin{equation}\label{eq_stationary_error_consistency}
    |(\calR,v_h)| \leq \frac{C}{\epsilon} h^{2p} + \epsilon \norm{\nabla v_h}^2.   
\end{equation}

\paragraph*{Step 3.}
We now estimate $\eta$ in the error decomposition \eqref{eq_error_decomposition_statonary} .
We subtract \eqref{eq_stationary_Piu} (with $v_h=\xi_h-u_h \in V_h^p$) from \eqref{eq_num_VI_primal_stationary} and obtain
\begin{align*}
    ( (-\Delta_h^M)^{-1} \eta , \xi_h - u_h ) + (\kappa \nabla \eta, \nabla (\xi_h -u_h)  )  \geq (\calR, \xi_h-u_h).
\end{align*}
Picking $\xi_h=\Pi_h u \in Y$, invoking \eqref{eq_stationary_error_consistency}, the coercivity of operator $-\DIV (\kappa \nabla)$, and using the integration by parts property of the $H_h^{-1}$-norm in \eqref{eq_H_minus_1_IBP}, we have
\begin{align*}
    \norm{\eta}_{H^{-1}_h}^2 + C_{\kappa}\norm{\nabla \eta} ^2 \leq \norm{\eta}_{H^{-1}_h}^2 + \norm{\sqrt{\kappa}\nabla \eta} ^2 \leq (\calR, \eta)\leq \frac{C}{\epsilon}h^{2p}  + \epsilon \norm{\nabla \eta}^2,
\end{align*}
where $C_{\kappa}>0$ is a constant depending on $\kappa$.
Choosing $\epsilon=\frac{C_{\kappa}}{2}$, we obtain
\begin{equation}
    \norm{\nabla \eta} ^2 \leq C h^{2p},
\end{equation}
which leads to the expected bound on $\eta$.

\paragraph{Step 4.} We now apply Lemma \ref{lemma:BP_MC_interpolation} and the estimates in step 3 to find
\begin{equation*}
    \norm{\nabla (u-u_h)} \leq \norm{\nabla (u-\Pi_h u)} + \norm{\nabla (\Pi_h u-u_h)} = \norm{\nabla \eta} + \norm{\nabla \zeta} \leq C h^p,
\end{equation*}
which completes the proof.

\end{proof}

\subsection{Practical implementation} 

Variational inequalities with box constraints are typically solved using primal--dual active set methods; we refer to \cite{Hintermuller_Ito_Kunisch_2002,Butz_2012} and the references therein for details. In general, for a given fourth-order discretized variational inequality of the form 
\begin{equation*}
    \begin{cases}
        (u_h,v_h) + (M\nabla w_h,\nabla v_h) =(f_1,v_h), \qquad \forall v_h\in V_h^p,\\
        (w_h,\xi_h-u_h) \leq (\kappa\nabla u_h,\nabla (\xi_h-u_h)) + (f_2,\xi_h-u_h), \qquad \forall \xi_h\in V_h^{p,+},\\
    \end{cases}
\end{equation*}
with $f_1,f_2 \in L^2(\Dom)$, the primal--dual active set method proceeds as follows:

\begin{enumerate}
    \item Set threshold parameter $c>0$, iteration index $j=0$, initial guess $u_{h,0}\in V_h^p$, $A_0=\{i\in\DoFSet \ | \ U_{i,0}<a \text{ or } U_{i,0}>b  \}$, $I_0=\DoFSet\setminus A_0$.
    \item Set $V_{h,j}^p\eqq \text{span}\{\varphi_i\}_{i\in I_j}$.    
    \item  Solve $(u_{h,j+1},w_{h,j+1})\in V_h^p \times V_h^p$ via
    \begin{equation*}
        \begin{cases}
            (u_{h,j+1},v_h) + (M\nabla w_{h,j+1},\nabla v_h) =(f_1,v_h), \qquad \forall v_h\in V_h^p,\\
            (w_{h,j+1},\xi_h) = (\kappa\nabla u_{h,j+1},\nabla \xi_h) + (f_2,\xi_h), \qquad \forall \xi_h\in V_{h,j}^p,\\
            U_{i,j+1}=\min(b,\max(a,U_{i,j})), \qquad \forall i \in A_j.
        \end{cases}
    \end{equation*}
    \item Define $\mu_{h,j+1}\in V_h^p$ via 
    \begin{equation*}
        \begin{cases}
            (\mu_{h,j+1},\xi_h) = (w_{h,j+1},\xi_h) - (\kappa\nabla u_{h,j+1},\nabla \xi_h) - (f_2,\xi_h), \qquad \forall \xi_h\in V_{h,j}^p,\\
            \mu_{i,j+1}=0,\qquad \forall i\in I_j.
        \end{cases}
    \end{equation*}
    \item Set $A_{j+1}=\{i\in\DoFSet \ | \ U_{i,j+1}+\frac{\mu_{i,j+1}}{c}<a \text{ or } U_{i,j+1}+\frac{\mu_{i,j+1}}{c}>b  \}$, $I_{j+1}=\DoFSet\setminus A_{j+1}$.
    \item If $A_{j+1}=A_j$ stop; otherwise set $j=j+1$, go to step 2.
\end{enumerate}
In practice, we take $c=10^{-2}$, which is robust in our experiments. The initial guess is given by standard FEM approximation.

\section{Application to parabolic problems}\label{Sec:application_parabolic}
In this section, we extend the variational inequality framework to time-dependent problems. We construct bound-preserving and mass-conservative schemes for nonlinear fourth-order parabolic equations and propose a regularization-based extension to second-order problems. The former admits a rigorous analysis for the well-posedness, while for the latter we primarily provide numerical evidence. 

We consider three classes of problems: 1. fourth-order problem \eqref{eq:ch-linear} without a potential term; 2. fourth-order parabolic problem \eqref{eq:ch-general} with a general potential $f\neq 0$; 3. second-order parabolic problem \eqref{eq:second-order-parabolic}. The corresponding schemes are presented first, followed by their fundamental properties.

The temporal discretization is defined on time nodes $\{t^n\}_{n=0}^N$ with $t^0=0$ and $t^N=T$. Let $\dt^n = t^{n+1}-t^n$ for $n\in\calN\eqq\intset{0}{N-1}$, and denote $\calN_k\eqq \intset{k-1}{N-1}$ for $k\in\{1,\ldots,5\}$. For simplicity, we assume a uniform time step $\dt$.

\subsection{Numerical schemes}
We now present three schemes corresponding to the above problem classes. For fourth-order parabolic problems without a potential term, we combine the variational inequality formulation in space with BDF temporal discretizations, leading to schemes that preserve bounds and mass. In addition, for first-order temporal discretization, the energy stability is also satisfied. For problems with a potential term, we incorporate the SAV technique to enhance the stability while maintaining bounds and mass. For second-order parabolic problems, we introduce a fourth-order regularization, allowing the same framework to be applied.

\paragraph*{Fourth-order parabolic case without potential term.} Here we consider the model problem \eqref{eq:ch-linear}. Similar to the linear elliptic case, under suitable regularity assumptions \cite{Butz_2012}, the model problem can be reformulated as the following variational inequality: find $(u,w)\in H^1((0,T]; H^{-1}(\Dom))\cap L^2((0,T];H^2(\Dom;[a,b]))\cap L^\infty((0,T];H^1(\Dom)) \times L^2((0,T];H^1(\Dom))$ such that for almost every $t\in(0,T]$,
\begin{equation}\label{eq_model_without_f_VI}
    \begin{cases}
        \langle\partial_t u, v\rangle_{H^{-1},H^1} + (M(u)\nabla w , \nabla v) = 0, \quad \forall v\in H^1(\Dom),\\
        (w,\xi-u) \leq (\nabla u, \nabla ( \xi-u) ), \quad \forall \xi\in H^1(\Dom;[a,b]).
    \end{cases}
\end{equation}
The spatial discretization is obtained by replacing $H^1(\Dom)$ and $H^1(\Dom;[a,b])$ by $V_h^p$ and $V_h^{p,+}$, respectively. For the temporal discretization, we consider the BDF-type discretization with $M$ treated explicitly. The fully discrete problem then reads as follows: for all $n\in\calN_k$, find $(u_h^{n+1},w_h^{n+1})\in V_h^{p,+}\times V_h^p$ such that
\begin{equation}\label{eq_scheme_without_f}
    \begin{cases}
        ( \frac{\alpha_k u_h^{n+1} - A_k(u_h^n)}{\dt} , v_h ) + ( M(\Bar{u}_h^{n+1}) \nabla w_h^{n+1} , \nabla v_h) = 0, \qquad \forall v_h\in V_h^p,\\
        (w_h^{n+1},\xi_h-u_h^{n+1}) \leq (\nabla u_h^{n+1}, \nabla (\xi_h-u_h^{n+1}) ), \quad \forall \xi_h \in V_h^{p,+},
    \end{cases}
\end{equation}
where $\alpha_k$ and operators $A_k$, $k\in\{1,2,3,4,5\}$ are defined through the standard BDF$k$ convention, given by:
\begin{equation}
\begin{aligned}
    &\alpha_1=1, \quad \alpha_2=\frac{3}{2},\quad \alpha_3=\frac{11}{6},\quad \alpha_4=\frac{25}{12},\quad \alpha_5=\frac{137}{60},\\
    &A_1(u_h^n)=u_h^n,\quad  A_2(u_h^n)=2u_h^n-\frac{1}{2}u_h^{n-1},\quad A_3(u_h^n)=3u_h^n-\frac{3}{2}u_h^{n-1}+\frac{1}{3}u_h^{n-2},\\
    &A_4(u_h^n)=4u_h^n-3u_h^{n-1}+\frac{4}{3}u_h^{n-2}-\frac{1}{4}u_h^{n-3},\quad A_5(u_h^n)=5u_h^n-5u_h^{n-1}+\frac{10}{3}u_h^{n-2}-\frac{5}{4}u_h^{n-3}+\frac{1}{5}u_h^{n-4}
\end{aligned}
\end{equation}
and $\Bar{u}_h^{n+1} = \sum_{i\in\DoFSet}\varphi_i \min(b,\max(a,B_k(U_i^n)))$ is a bound-preserving modification of the standard extrapolation, obtained by projecting the extrapolated values onto the admissible interval $[a,b]$ at each node. Operator $B_k$ with $k\in\{1,2,3,4,5\}$ is the standard extrapolation operator given by
\begin{equation}
\begin{aligned}
    &B_1(u_h^n)=u_h^n,\quad B_2(u_h^n)=2u_h^n-u_h^{n-1},\quad B_3(u_h^n)=3u_h^n-3u_h^{n-1}+u_h^{n-2},\\
    &B_4(u_h^n)=4u_h^n-6u_h^{n-1}+4u_h^{n-2}-u_h^{n-3},\quad B_5(u_h^n)=5u_h^n-10u_h^{n-1}+10u_h^{n-2}-5u_h^{n-3}+u_h^{n-4}.
\end{aligned}
\end{equation}
\begin{remark}[mass-conservation and BDF]
    We observe that, the operator pair $(\alpha_k,A_k)$ in the above definition is mass-conservative, in the sense that if $\int_\Dom u_h^n=\cdots=\int_\Dom u_h^{n-k+1}$, we have $\int_\Dom \alpha_k u_h^{n+1} = \int_\Dom A_k(u_h^n)$.
\end{remark}

\paragraph*{General fourth-order parabolic case.} 
When $f\neq 0$, introducing the auxiliary variable $r = \sqrt{E_1(u)}$ \cite{Shen_Xu_Yang_2019} to enhance the stability, the system with sufficiently smooth solution can be reformulated as
\begin{equation*}
\begin{cases}
    \partial_t u =\DIV (M(u)\nabla w),\\
    w = -\Delta u + \frac{r}{\sqrt{E_1(u)}}f(u),\\
    \partial_t r= (\frac{f(u)}{2\sqrt{E_1(u)}},\partial_t u).
\end{cases}
\end{equation*}
We discretize this system using BDF time stepping, explicit extrapolation for nonlinear terms, and the variational inequality constraint. The resulting fully discrete scheme reads: for $n\in\calN_k$, find $(u_h^{n+1},r^{n+1},w_h^{n+1})\in V_h^{p,+}\times \Real \times V_h^p$ such that
\begin{equation}\label{eq_scheme}
    \begin{cases}
        ( \frac{\alpha_k u_h^{n+1} - A_k(u_h^n)}{\dt} , v_h ) + ( M(\Bar{u}_h^{n+1}) \nabla w_h^{n+1} , \nabla v_h) = 0, \qquad \forall v_h\in V_h^p,\\
        (w_h^{n+1},\xi_h-u_h^{n+1}) \leq (\nabla u_h^{n+1}, \nabla (\xi_h-u_h^{n+1}) ) + (\frac{r^{n+1}}{\sqrt{E_1(\Bar{u}_h^{n+1})}}f(\Bar{u}_h^{n+1}),\xi_h-u_h^{n+1}), \quad \forall \xi_h \in V_h^{p,+},\\
        \alpha_k r^{n+1} - A_k(r^n) = (\frac{f(\Bar{u}_h^{n+1})}{2\sqrt{E_1(\Bar{u}_h^{n+1})}}  , \alpha_k u_h^{n+1} - A_k(u_h^n) ),
    \end{cases}
\end{equation}
where $\alpha_k$, $A_k$ and $\Bar{u}_h^{n+1}$ are defined as in \eqref{eq_scheme_without_f}.

\paragraph*{Second-order parabolic case.} For the model problem \eqref{eq:second-order-parabolic}, we perturb it by adding a small fourth-order operator $\epsilon \Delta^2 u$ with $\epsilon>0$ in the PDE. Formally, this modification introduces a modeling error of order $O(\epsilon)$. We illustrate this idea by considering the Poisson problem $-\Delta u =f_1$ with periodic boundary condition in the torus $\mathbb{T}^d$. Through Fourier transform, the original PDE is equivalent to $\sum_{k\in\mathbb{Z}^d}|k|^2\hat{u}(k)e^{ik\xcoord} = \sum_{k\in\mathbb{Z}^d}\hat{f_1}(k) e^{ik\xcoord}$, with Fourier modes $\{\hat{u}(k)\}_{k\in\mathbb{Z}^d}$ and $\{\hat{f_1}(k)\}_{k\in\mathbb{Z}^d}$. Owing to the orthogonality of Fourier basis, for each mode $k\in \mathbb{Z}^d \setminus \{0\}$, we have $|k|^2\hat{u}(k)=\hat{f_1}(k)$. For the perturbed PDE $-\Delta u^\epsilon + \epsilon \Delta^2 u^\epsilon =f_1$, the same argument leads to $(|k|^2+\epsilon |k|^4)\hat{u}^\epsilon(k)=\hat{f_1}(k)$. Hence for each mode $k\neq 0$, the perturbation introduces an error of $O(\epsilon)$. This argument formally indicates that the perturbation introduces an $O(\epsilon)$ consistency error at the continuous level.

At the discrete level, different placements of $\epsilon$ lead to systems with varying conditioning. We consider a symmetric scaling to minimize the conditioning and motivate the following regularized model:
\begin{equation*}
    \begin{cases}
        \partial_t u = \DIV (K(u) \nabla u) + \sqrt{\epsilon} \Delta w,\\
        w=-\sqrt{\epsilon}\Delta u,
    \end{cases}
\end{equation*}
with the initial data $u_0$ and the homogeneous Neumann boundary conditions for $u$ and $w$. It follows that an artificial boundary condition for $\Delta u$ is imposed since the second-order parabolic problem does not have homogeneous Neumann boundary condition for $w\eqq -\sqrt{\epsilon}\Delta u$. However, based on our numerical observation on \eqref{eq_second_accuracy}, the model error introduced by such perturbation appears negligible, and it does not influence the convergence rate for smooth solutions.
Then, we reformulate the above model problem as the fourth-order cases: find $(u,w)\in H^1((0,T]; H^{-1}(\Dom))\cap L^2((0,T];H^2(\Dom;[a,b]))\cap L^\infty((0,T];H^1(\Dom)) \times L^2((0,T];H^1(\Dom))$ such that for almost every $t\in(0,T]$,
\begin{equation}\label{eq_model_second_order_VI}
    \begin{cases}
        \langle \partial_t u, v\rangle_{H^{-1},H^1} + \sqrt{\epsilon} (\nabla w , \nabla v) + (K(u)\nabla u , \nabla v) = 0, \quad \forall v\in H^1(\Dom),\\
        (w,\xi-u) \leq \sqrt{\epsilon}(\nabla u, \nabla ( \xi-u) ), \quad \forall \xi\in H^1(\Dom;[a,b]).
    \end{cases}
\end{equation}

Then, we discretize it as in \eqref{eq_scheme_without_f}, with $\epsilon=h^{p+1}$ to balance the perturbation error and conditioning, leading to the fully discretized problem: for all $n\in\calN_k$, find $(u_h^{n+1},w_h^{n+1})\in V_h^{p,+}\times V_h^p$ such that
\begin{equation}\label{eq_scheme_second_order}
    \begin{cases}
        ( \frac{\alpha_k u_h^{n+1} - A_k(u_h^n)}{\dt} , v_h ) + ( K(\Bar{u}_h^{n+1}) \nabla u_h^{n+1} , \nabla v_h) + \sqrt{h^{p+1}} (\nabla w_h^{n+1},\nabla v_h)   = 0, \qquad \forall v_h\in V_h^p,\\
        ( w_h^{n+1}, \xi_h-u_h^{n+1}) \leq \sqrt{h^{p+1}} (\nabla u_h^{n+1}, \nabla (\xi_h-u_h^{n+1}) ), \quad \forall \xi_h \in V_h^{p,+},
    \end{cases}
\end{equation}
with $\alpha_k$, $A_k$ the BDF operators, and $\Bar{u}_h^{n+1}$ the extrapolation approximation of $u_h^{n+1}$ constructed as in \eqref{eq_scheme_without_f}. 

Unlike the fourth-order case, the resulting scheme cannot be recast as a convex minimization problem. Therefore, its well-posedness is not covered by the present analytical framework. Nevertheless, the scheme exhibits stable behavior in all numerical experiments, which provides strong evidence of well-posedness.

Two components remain to complete the proposed schemes. The first concerns the construction of high-order, bound-preserving and mass-conservative initial data $(u_h^0,...,u_h^{k-1})$; the other concerns the construction of initial SAVs. We answer them one by one in the rest of this part.

\paragraph*{Initial data.} For BDF-type schemes, the initial data $u_h^0$ and $(u_h^1,...,u_h^{k-1})$ are in general prepared by the $L^2$-projection and the Runge-Kutta method, respectively. We refer to \cite{Thomee_2006} and \cite{Huang_Shen_2022} for details. The drawback of this initialization is the lack of the bound-preservation. Hence we propose two different strategies here. 

In the case $k=1,2$ and $u_0\in H^2(\Dom)$, we can generate $u_h^0$ by our stationary scheme \eqref{eq_scheme_stationary}, \ie we use the stationary scheme \eqref{eq_scheme_stationary} as a bound-preserving and mass-conservative alternative to the $L^2$-projection. More precisely, we define $u_h^0\in V_h^{p,+}$ by solving
\begin{equation*}
    \begin{cases}
        (u_h^0,v_h) + (\nabla w_h,\nabla v_h)= ( u_0 + \Delta^2 u_0,v_h), \qquad \forall v_h \in V_h^p,\\
        (w_h,\xi_h-u_h^0) \leq (\nabla u_h^0, \nabla (\xi_h-u_h^0)), \qquad \forall \xi_h\in V_h^{p,+}.
    \end{cases}
\end{equation*}
By the construction, $u_h^0$ is bound-preserving and mass-conservative.  Then $u_h^1$ can be prepared by \eqref{eq_scheme_without_f}, \eqref{eq_scheme} or \eqref{eq_scheme_second_order} with $k=1$.

For other cases, we consider a bound-preserving and mass-conservative postprocessing to prepare initial data. We first compute $\Tilde{u}_h^0$ and $(\Tilde{u}_h^1,...,\Tilde{u}_h^{k-1})$ by the $L^2$-projection and the Runge-Kutta method, respectively. Then, we apply the optimization-based postprocessing
\begin{equation*}
    u_h^j = \argmin_{v_h\in V_h^{p,+}, \int_\Dom v_h=\int_\Dom \Tilde{u}_h^j} \norm{v_h-\Tilde{u}_h^j}^2, \qquad j\in \{0,...,k-1\}
\end{equation*}
to define the initial data. The above postprocessing is motivated by \cite{Liu_Riviere_Shen_Zhang_2024}, and it is numerically observed to achieve the optimal convergence rate.

\paragraph*{Initialization of SAV.} Since at the continuous level, $r=\sqrt{E_1(u)}$, we set $r^n=\sqrt{E_1(u_h^n)}$ for $n\in \{0,...,k-1\}$. This choice maintains the optimal approximation properties of $r^n$, once $u_h^n$ has such properties.

\subsection{Fundamental properties}\label{Sec:fundamental_properties}

We first establish the well-posedness and structure-preserving properties of the scheme \eqref{eq_scheme} for fourth-order parabolic problems with a potential term, as this formulation is the most general. The scheme \eqref{eq_scheme_without_f} can be treated as a special case of \eqref{eq_scheme}.

For the regularized second-order scheme \eqref{eq_scheme_second_order}, the presence of the additional diffusion term prevents a direct reformulation as a convex minimization problem. Therefore, its well-posedness is not covered by the present analytical framework. Instead, we prove the bound-preservation and mass-conservation by assuming the existence of the numerical solution.

\paragraph*{General fourth-order parabolic case.}

The fundamental properties of the proposed scheme \eqref{eq_scheme} can be derived following the same argument used for the stationary scheme \eqref{eq_scheme_stationary}.

The following lemma establishes the well-posedness of the variational inequality \eqref{eq_scheme}.
\begin{lemma}[Properties of \eqref{eq_scheme}]\label{lemma:well_posedness}
    For all $n\in\calN_k$, the variational inequality \eqref{eq_scheme} admits a solution $(u_h^{n+1},r^{n+1},w_h^{n+1})$. Moreover, $u_h^{n+1}$ is unique, bound-preserving, mass-conservative and the unique minimizer of the following problem:
    \begin{equation}\label{eq_minimization_dis}
        \min_{(v,s) \in X^{n+1}  } \Big\{  \frac{\alpha_k}{2}\norm{\nabla v}^2 + \alpha_k s^2 + \frac{1}{2\dt} \norm{\alpha_k v - A_k(u_h^n)}_{H^{-1,n+1}_h}^2 \Big\},
    \end{equation}
    where
    \begin{equation}
        X^{n+1}=\{(v,s)\in V_h^{p,+}\times \Real \ | \  \int_\Dom v = \int_\Dom u_h^n \ , \  \alpha_k s - A_k(r^n) = (\frac{f(\Bar{u}_h^{n+1})}{2\sqrt{E_1(\Bar{u}_h^{n+1})}}  , \alpha_k v - A_k(u_h^n) )  \}.
    \end{equation}
    
    In addition, for $k=1$, \ie the first-order temporal discretization, we have the modified energy stability
        \begin{equation}\label{eq_modified_energy_stability}
            \frac{1}{2}\norm{\nabla u_h^{n+1}}^2 +(r^{n+1})^2 \leq \frac{1}{2}\norm{\nabla u_h^n}^2 +(r^n)^2.
        \end{equation}
\end{lemma}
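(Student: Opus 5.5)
The plan is to transfer the argument of Lemma~\ref{lemma:well_posedness_stationary} to the time step $n\to n+1$. First I fix the notation. The weighted discrete Poisson solver $(-\Delta_h^{M,n+1})^{-1}$ is defined by \eqref{eq_weighted_Poisson_stationary_discrete} with $M$ replaced by $M(\Bar{u}_h^{n+1})$. This is legitimate because $\Bar{u}_h^{n+1}$ is nodally bound-preserving, so $M(\Bar{u}_h^{n+1})$ is uniformly positive; this is also where the truncation in $\Bar{u}_h^{n+1}$ is used. The seminorm $\norm{\cdot}_{H^{-1,n+1}_h}$ is defined accordingly. Mass conservation then follows by testing the first line of \eqref{eq_scheme} with $v_h=1$, which gives $\int_\Dom \alpha_k u_h^{n+1}=\int_\Dom A_k(u_h^n)=\alpha_k\int_\Dom u_h^n$, by induction and the remark on mass conservation of $(\alpha_k,A_k)$. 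As a consequence $\alpha_k v-A_k(u_h^n)$ is mean-free for every $(v,s)\in X^{n+1}$, so the $H^{-1}$ term in \eqref{eq_minimization_dis} is well defined. Bound preservation holds by construction, since $u_h^{n+1}\in V_h^{p,+}$.

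For existence and uniqueness, I note that the constraint defining $s$ in $X^{n+1}$ is affine and solvable for $s$. Writing $g\eqq\frac{f(\Bar{u}_h^{n+1})}{2\sqrt{E_1(\Bar{u}_h^{n+1})}}$, we get $s=s(v)=\alpha_k^{-1}\big(A_k(r^n)+(g,\alpha_k v-A_k(u_h^n))\big)$. Substituting this reduces the problem to minimizing over $Y^{n+1}=\{v\in V_h^{p,+}:\int_\Dom v=\int_\Dom u_h^n\}$. This set is nonempty (the constant $\frac{1}{|\Dom|}\int_\Dom u_h^n\in[a,b]$ belongs to it), convex and closed, exactly as in Step~1 of the proof of Lemma~\ref{lemma:well_posedness_stationary}.

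The functional $\frac{\alpha_k}{2}\norm{\nabla v}^2+\alpha_k s(v)^2+\frac{1}{2\dt}\norm{\alpha_k v-A_k(u_h^n)}_{H^{-1,n+1}_h}^2$ is convex, since $s(v)^2$ is the square of an affine function. It is strictly convex on the affine mass-constrained set: if $v_1\neq v_2$ have the same mean, then $\nabla(v_1-v_2)\neq0$, so the Dirichlet term is strictly convex there. This yields a unique minimizer $u_h^{n+1}$. I then repeat Step~2 of the stationary proof. I introduce the auxiliary $\tilde v_h=(-\Delta_h^{M,n+1})^{-1}(\cdot)$ constraint, the multiplier $w_h$, and the multiplier for the $s$-constraint, together with dofwise multipliers $\mu_\pm$. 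From the KKT system I identify $w_h^{n+1}$ up to a constant, and the multiplier of the $s$-equation reproduces the factor $r^{n+1}/\sqrt{E_1(\Bar{u}_h^{n+1})}$ via the stationarity condition in $s$, namely $2\alpha_k s=\alpha_k\lambda$. The dofwise construction of $\mu_\pm$ then shows that the KKT system is equivalent to \eqref{eq_scheme}.

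The main obstacle is checking that the multiplier bookkeeping produces exactly the coefficient $r^{n+1}/\sqrt{E_1}$ with the right scaling by $\alpha_k$ and $\dt$. I would absorb $\dt$ by rescaling $w_h$, and check the cross term $2\alpha_k s(v)\,\partial_v s=2 s\,(g,\cdot)\alpha_k=\alpha_k(\frac{r f}{\sqrt{E_1}},\cdot)$. Up to the overall factor $\alpha_k$, which also multiplies the Dirichlet term, this matches the second line of \eqref{eq_scheme}.

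For the energy estimate with $k=1$, I would take $\xi_h=u_h^n\in V_h^{p,+}$ in the second line of \eqref{eq_scheme}. This gives
\[
(w_h^{n+1},u_h^n-u_h^{n+1})\le(\nabla u_h^{n+1},\nabla(u_h^n-u_h^{n+1}))+\Big(\tfrac{r^{n+1}f(\Bar{u}_h^{n+1})}{\sqrt{E_1}},u_h^n-u_h^{n+1}\Big).
\]
Testing the first line with $v_h=w_h^{n+1}$ gives $(u_h^{n+1}-u_h^n,w_h^{n+1})=-\dt\norm{\sqrt{M}\nabla w_h^{n+1}}^2\le0$. The third line multiplied by $2r^{n+1}$ gives $2r^{n+1}(r^{n+1}-r^n)=(\frac{r^{n+1}f}{\sqrt{E_1}},u_h^{n+1}-u_h^n)$. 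Combining these three relations yields
\[
(\nabla u_h^{n+1},\nabla(u_h^{n+1}-u_h^n))+2r^{n+1}(r^{n+1}-r^n)\le -\dt\norm{\sqrt M\nabla w_h^{n+1}}^2\le0.
\]
Finally, the identity $2a(a-b)=a^2-b^2+(a-b)^2$ gives \eqref{eq_modified_energy_stability}.
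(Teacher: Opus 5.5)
Your proposal is correct. For existence, uniqueness, bound-preservation, mass-conservation and the equivalence with \eqref{eq_scheme}, it follows the same lines as the paper:
\begin{itemize}
\item a nonempty, convex, closed admissible set and a strictly convex functional;
\item a KKT system with the auxiliary Poisson variable, the multiplier $w_h$ and dofwise multipliers $\mu_\pm$;
\item stationarity in $s$ giving $\lambda=2s$, hence the factor $r^{n+1}/\sqrt{E_1}$.
\end{itemize}
Eliminating $s$ through its affine constraint, and using the fixed mass to get strict convexity from the Dirichlet term, makes precise what the paper only asserts.

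The genuine difference is the energy estimate for $k=1$. The paper does not test the scheme. It notes that $(u_h^n,r^n)\in X^{n+1}$ when $k=1$ and compares the minimal value of \eqref{eq_minimization_dis} with the value at this competitor, which gives
\[
\tfrac12\norm{\nabla u_h^{n+1}}^2+(r^{n+1})^2+\tfrac{1}{2\dt}\norm{u_h^{n+1}-u_h^n}^2_{H^{-1,n+1}_h}\le\tfrac12\norm{\nabla u_h^n}^2+(r^n)^2 .
\]
This is a one-line minimizing-movement argument, but it relies on Step~2, i.e.\ on the solution of the scheme being the minimizer.

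Your route is the standard SAV energy argument: test the inequality with $\xi_h=u_h^n$, the first equation with $w_h^{n+1}$, and the scalar equation with $2r^{n+1}$. It uses only the equations of \eqref{eq_scheme}, so it holds for any solution, independently of the minimization characterization. It also gives a sharper law, containing the numerical dissipation $\tfrac12\norm{\nabla(u_h^{n+1}-u_h^n)}^2+(r^{n+1}-r^n)^2$ and the full $\dt\norm{\sqrt{M}\nabla w_h^{n+1}}^2$. The paper's proximal term equals only half of the latter.

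One small correction: for nonemptiness, use $u_h^n$ itself (as the paper does) rather than the constant $\frac{1}{|\Dom|}\int_\Dom u_h^n$. Nodal bounds alone do not place the mean in $[a,b]$ when some $\int_\Dom\varphi_i<0$; the vertex functions of $\mathbb{P}_2$ on tetrahedra are an example. Your witness is therefore only justified by tracing the mass back to $u_0$.

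The same nodal-versus-pointwise caveat applies to your claim that the truncation in $\Bar{u}_h^{n+1}$ makes $M(\Bar{u}_h^{n+1})$ uniformly positive. For $p\ge 2$, this rests on the standing assumption that $M$ is uniformly positive, not on nodal bounds.
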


\begin{remark}[SAV stabilization]
    We note that the modified energy stability \eqref{eq_modified_energy_stability} holds true unconditionally, similar to its application on parabolic PDEs in \cite{Shen_Xu_Yang_2019,Huang_Shen_2021}. Although the modified energy stability is not proved for high-order temporal discretizations, they are numerically observed to have an improved energy stability, even for a challenging Cahn--Hilliard test in \eqref{eq_CH_log_BP}.
\end{remark}

\begin{proof}
    The proof follows the same variational framework as in Lemma~\ref{lemma:well_posedness_stationary}. For completeness, we outline the main steps, emphasizing the additional coupling introduced by the auxiliary variable $r$.

    \paragraph*{Step 1: well-posedness of the minimization problem.}
    We first verify that the admissible set $X^{n+1}$ is nonempty, convex, and closed.
    
    \textbf{Nonemptiness.} 
    We notice that $(u_h^n,\frac{1}{\alpha_k} ( (\frac{f(\Bar{u}_h^{n+1})}{2\sqrt{E_1(\Bar{u}_h^{n+1})}}  , \alpha_k u_h^n - A_k(u_h^n) )) + A_k(r^n) )$ satisfies all constraints of $X^{n+1}$, hence $X^{n+1}$ is nonempty.
    
    \textbf{Convexity.}  
    Let $(v_1,s_1),(v_2,s_2)\in X^{n+1}$ and $\theta\in[0,1]$. Then $\theta v_1+(1-\theta)v_2\in V_h^{p,+}$ and satisfies the mass constraint by linearity. Moreover, the linear constraint on $s$ is preserved by convex combinations. Hence $X^{n+1}$ is convex.
    
    \textbf{Closedness.}  
    Since $V_h^{p,+}$ is closed in $V_h^p$ and the additional constraints are linear, $X^{n+1}$ is closed.
    
    Therefore, the minimization problem \eqref{eq_minimization_dis} consists of minimizing a strictly convex functional over a finite-dimensional, nonempty, closed, and convex set. It thus admits a unique minimizer $(u_h^{n+1},r^{n+1})$.
    
    \paragraph*{Step 2: equivalence with the variational inequality.}
    We show that the minimization problem \eqref{eq_minimization_dis} is equivalent to the variational formulation \eqref{eq_scheme}. For notational simplicity, we write
    \[
    M = M(\Bar{u}_h^{n+1}), \qquad f = f(\Bar{u}_h^{n+1}), \qquad E_1 = E_1(\Bar{u}_h^{n+1}).
    \]
    
    The minimization problem can be rewritten in the equivalent form
    \begin{align*}
    \min_{(u_h,v_h,s)\in V_h^p \times V_h^p \times \mathbb{R}} \;& \frac{\alpha_k}{2}\|\nabla u_h\|^2 + \alpha_k s^2 + \frac{\dt}{2}(M\nabla v_h,\nabla v_h) \\
    \text{s.t. } 
    & (\alpha_k u_h - A_k(u_h^n),\chi_h) + \dt (M\nabla v_h,\nabla \chi_h) = 0, \quad \forall \chi_h\in V_h^p, \quad \int_\Dom v_h = 0,\\
    & a \le U_i \le b, \quad \forall i\in \DoFSet,\quad u_h=\sum_{i\in\DoFSet} U_i\varphi_i,\\
    & \alpha_k s - A_k(r^n) = \Big(\frac{f}{2\sqrt{E_1}},\alpha_k u_h - A_k(u_h^n)\Big).
    \end{align*}
    
    By standard convex optimization theory with box and linear constraints, the minimizer satisfies the associated KKT system. Eliminating the Lagrange multipliers associated with the box constraints yields the variational inequality: find $(u_h,w_h)\in V_h^{p,+}\times V_h^p$ such that
    \begin{align*}
    &(\tfrac{\alpha_k u_h - A_k(u_h^n)}{\dt}, \chi_h) + (M\nabla w_h,\nabla \chi_h) = 0, \quad \forall \chi_h\in V_h^p,\\
    &(w_h,\xi_h-u_h) \leq (\nabla u_h,\nabla(\xi_h-u_h)) + \Big(\frac{s}{\sqrt{E_1}}f,\xi_h-u_h\Big), \quad \forall \xi_h\in V_h^{p,+},
    \end{align*}
    together with the scalar constraint
    \[
    \alpha_k s - A_k(r^n) = \Big(\frac{f}{2\sqrt{E_1}},\alpha_k u_h - A_k(u_h^n)\Big).
    \]
    
    This is precisely the scheme \eqref{eq_scheme}. Therefore, the minimization problem and the variational formulation are equivalent, and the well-posedness of \eqref{eq_scheme} follows.

    \paragraph*{Step 3: structure-preservation.}
    For the structure-preserving properties, we notice that $(u_h^{n+1},r^{n+1})$ is an element in $X^{n+1}$, hence $u_h^{n+1}$ is mass-conservative and bound-preserving. Then, for $k=1$, we notice that $(u_h^{n+1},r^{n+1})$ is a minimizer of the problem \eqref{eq_minimization_dis}, and $(u_h^n,r^n)$ is also in $X^{n+1}$, we have
    \begin{align*}
        \frac{1}{2}\norm{\nabla u_h^{n+1}}^2 &+ (r^{n+1})^2 + \frac{1}{2\dt}\norm{u_h^{n+1}-u_h^n}_{H_h^{-1,n+1}}^2\\
        &= \min_{(v_h,s)\in X^{n+1}} \Big( \frac{1}{2}\norm{\nabla v_h}^2 + s^2 + \frac{1}{2\dt}\norm{v_h-u_h^n}_{H_h^{-1,n+1}}^2 \Big) \leq \frac{1}{2}\norm{\nabla u_h^n }^2 + (r^n )^2.
    \end{align*}
    It follows the expected modified energy stability \eqref{eq_modified_energy_stability}, which concludes the proof.
    
\end{proof}

\paragraph{Fourth-order parabolic case without potential term.}
The well-posedness and structure-preserving properties of \eqref{eq_scheme_without_f} can be proved by considering them as a simplified case of Lemma \ref{lemma:well_posedness}.
\begin{lemma}[Properties of \eqref{eq_scheme_without_f}]\label{lemma:properties_scheme_without_f}
    For all $n\in\calN_k$, the variational inequality \eqref{eq_scheme_without_f} has a solution $(u_h^{n+1},w_h^{n+1})$, and $u_h^{n+1}$ is unique, bound-preserving and mass-conservative. Moreover, it is the unique minimizer of the following problem:
    \begin{equation}\label{eq_minimization_dis_without_f}
        \min_{v \in Y^{n+1}  } \Big\{  \frac{\alpha_k}{2}\norm{\nabla v}^2 + \frac{1}{2\dt} \norm{\alpha_k v - A_k(u_h^n)}_{H^{-1,n+1}_h}^2 \Big\},
    \end{equation}
    where
    \begin{equation}
        Y^{n+1}=\{v\in V_h^{p,+} \ | \  \int_\Dom v = \int_\Dom u_h^n\}.
    \end{equation}
    In addition, for $k=1$, \ie the first-order temporal discretization, we have the energy stability
    \begin{equation}
        \frac{1}{2}\norm{\nabla u_h^{n+1}}^2 \leq \frac{1}{2}\norm{\nabla u_h^n}^2.
    \end{equation}
\end{lemma}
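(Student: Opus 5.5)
Since \eqref{eq_scheme_without_f} is \eqref{eq_scheme} with $f\equiv 0$, we mirror Lemma~\ref{lemma:well_posedness} but drop the scalar variable $s$. Throughout, $M\eqq M(\Bar{u}_h^{n+1})$ is fixed and uniformly positive, because $\Bar{u}_h^{n+1}$ is nodally projected onto $[a,b]$. Hence the weighted discrete solver $(-\Delta_h^M)^{-1}$ and the seminorm $\norm{\cdot}_{H^{-1,n+1}_h}$ are well defined on $L^2_0(\Dom)$, and the integration by parts identity \eqref{eq_H_minus_1_IBP} holds. We argue by induction on $n$, assuming $\int_\Dom u_h^j=\int_\Dom u_h^n$ for the $k$ previous levels. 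This holds for the initial data by construction. By the remark on mass conservation of $(\alpha_k,A_k)$, it gives $\int_\Dom(\alpha_k v - A_k(u_h^n))=0$ for every $v\in Y^{n+1}$, so the $H^{-1}$ term in \eqref{eq_minimization_dis_without_f} makes sense.

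\textbf{Step 1 (minimization).} $Y^{n+1}$ is nonempty because $u_h^n\in Y^{n+1}$. It is convex and closed for exactly the reasons given in Lemma~\ref{lemma:well_posedness_stationary}.

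Strict convexity of the functional on $Y^{n+1}$ is the one point needing care, since $\norm{\nabla v}^2$ alone is only convex. Take $v_1,v_2\in Y^{n+1}$. Their difference is mean-free, and $\norm{\cdot}_{H^{-1,n+1}_h}$ is a norm on discrete mean-free functions: if it vanishes, then $(-\Delta_h^M)^{-1}(v_1-v_2)$ is constant and mean-free, hence zero, and testing \eqref{eq_weighted_Poisson_stationary_discrete} with $v_h=v_1-v_2\in V_h^p$ gives $v_1=v_2$. Therefore the quadratic part $\frac{\alpha_k}{2}\norm{\nabla w}^2+\frac{\alpha_k^2}{2\dt}\norm{w}_{H^{-1,n+1}_h}^2$ is positive definite on the mean-free directions $w=v_1-v_2$. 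This yields strict convexity, and hence a unique minimizer $u_h^{n+1}\in Y^{n+1}$ in finite dimensions. Bound preservation and mass conservation follow at once from membership in $Y^{n+1}$.

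\textbf{Step 2 (equivalence with the VI).} We repeat Step 2 of Lemma~\ref{lemma:well_posedness_stationary}. Introduce $v_h\in V_h^p$ with $\int_\Dom v_h=0$ and the constraint $(\alpha_k u_h-A_k(u_h^n),\chi_h)+\dt(M\nabla v_h,\nabla\chi_h)=0$, and minimize $\frac{\alpha_k}{2}\norm{\nabla u_h}^2+\frac{\dt}{2}(M\nabla v_h,\nabla v_h)$ subject to the box constraints. After rescaling, this objective equals \eqref{eq_minimization_dis_without_f}, because $\dt\, v_h=(-\Delta_h^M)^{-1}(A_k(u_h^n)-\alpha_k u_h)$.

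Next form the Lagrangian with multipliers $w_h$ (scaled), $\kappa_1$ and $\mu_\pm$. Testing with $\chi_h=1$ gives $\kappa_1=0$, so $w_h$ and $v_h$ differ by a constant. The stationarity condition in $u_h$ then reads $\alpha_k(w_h,\chi_h)=\alpha_k(\nabla u_h,\nabla\chi_h)+\langle\mu_+-\mu_-,\chi_h\rangle$, up to scaling the multipliers by $\alpha_k$.

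The dofwise argument of Lemma~\ref{lemma:well_posedness_stationary} shows that the box complementarity conditions are equivalent to the inequality in \eqref{eq_scheme_without_f}. Conversely, any solution of \eqref{eq_scheme_without_f} yields a KKT point. The constraints are linear and box-type, so the KKT conditions are sufficient for a convex problem. Thus every solution has $u_h^{n+1}$ equal to the unique minimizer, which gives existence of $(u_h^{n+1},w_h^{n+1})$ and uniqueness of $u_h^{n+1}$.

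\textbf{Step 3 (energy, $k=1$).} Compare the minimal value with the value at the admissible competitor $u_h^n\in Y^{n+1}$. This gives
\[
\tfrac12\norm{\nabla u_h^{n+1}}^2+\tfrac{1}{2\dt}\norm{u_h^{n+1}-u_h^n}_{H^{-1,n+1}_h}^2\le \tfrac12\norm{\nabla u_h^n}^2 ,
\]
and dropping the nonnegative second term yields the claim.

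The main obstacle is the strict convexity and uniqueness claim in Step 1, which hinges on the $H^{-1}_h$ seminorm being definite on mean-free discrete functions. The rest is bookkeeping of the multiplier scalings.
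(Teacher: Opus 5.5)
Your proposal is correct and follows the paper's route. The paper's proof just invokes Lemma~\ref{lemma:well_posedness} with $f=0$ and $r=0$, which amounts to three steps: strictly convex minimization over a nonempty closed convex set, KKT equivalence with the variational inequality, and comparison with the admissible competitor $u_h^n$ when $k=1$. You carry out exactly that specialization, and you also make explicit the equal-mass induction, the strict convexity, and the KKT-sufficiency argument for uniqueness. One small caveat: for $p\ge 2$ nodal bounds on $\Bar{u}_h^{n+1}$ do not give pointwise bounds, so the uniform positivity of $M(\Bar{u}_h^{n+1})$ should be taken from the standing assumption that $M$ is uniformly positive, not from the nodal projection.
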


\begin{proof}
    This lemma is a special case of Lemmas \ref{lemma:well_posedness} with $f=0$ and $r=0$.
\end{proof}

\paragraph{Second-order parabolic case.}
We then discuss the properties of the proposed scheme \eqref{eq_scheme_second_order} for the second-order parabolic problem \eqref{eq:second-order-parabolic}. Unlike the fourth-order parabolic problems, the appearance of the additional diffusion term $(K(u)\nabla u,\nabla v)$ makes it difficult to reformulate the proposed scheme as a single variable minimization problem as in Lemmas \ref{lemma:well_posedness} and \ref{lemma:properties_scheme_without_f}. However, when we practically solve the variational inequality \eqref{eq_scheme_second_order} by the primal--dual active set method, the corresponding linear system has similar structure as the one in \eqref{eq_scheme_without_f}. The scheme \eqref{eq_scheme_second_order} is therefore expected to be well-posed. Nevertheless, assuming the well-posedness of the proposed scheme, we have the bound-preservation and mass-conservation properties:
\begin{lemma}[Properties of \eqref{eq_scheme_second_order}]\label{lemma:properties_scheme_second_order}
    Assume that \eqref{eq_scheme_second_order} admits a solution. Then, for all $n\in\calN_k$, the solution  $u_h^{n+1}$ in \eqref{eq_scheme_second_order} is bound-preserving and mass-conservative. 
\end{lemma}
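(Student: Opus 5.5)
Bound-preservation is immediate from the definition of the scheme: the unknown $u_h^{n+1}$ in \eqref{eq_scheme_second_order} is sought in $V_h^{p,+}$, so under the assumed existence of a solution its nodal values $U_i^{n+1}$ all lie in $[a,b]$. No argument beyond recalling the solution space is needed here, and this is also why the lemma only has to assume existence.

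For mass-conservation I will test the first equation of \eqref{eq_scheme_second_order} with $v_h\equiv 1\in V_h^p$, which is admissible since constants lie in the continuous Lagrange space. Both gradient terms vanish: $(K(\Bar{u}_h^{n+1})\nabla u_h^{n+1},\nabla 1)=0$ and $\sqrt{h^{p+1}}(\nabla w_h^{n+1},\nabla 1)=0$. What remains is $\int_\Dom \alpha_k u_h^{n+1}=\int_\Dom A_k(u_h^n)$. The variational inequality plays no role in this step.

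I will then argue by induction on $n\in\calN_k$. The hypothesis is that the initial data $u_h^0,\dots,u_h^{k-1}$ share a common mass; this holds by construction, whether they come from the stationary scheme \eqref{eq_scheme_stationary} with $k=1$ steps or from the mass-constrained postprocessing. If $\int_\Dom u_h^n=\cdots=\int_\Dom u_h^{n-k+1}=m_0$, the remark on mass-conservation and BDF gives $\int_\Dom A_k(u_h^n)=\alpha_k m_0$, because the coefficients of $A_k$ sum to $\alpha_k$. It follows that $\int_\Dom u_h^{n+1}=m_0$, which closes the induction.

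The only step needing care is this initialization of the induction, namely that all $k$ starting values carry the same mass. Everything else is a one-line computation.
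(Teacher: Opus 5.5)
Your proposal is correct and follows the paper's proof: bound-preservation comes from $u_h^{n+1}\in V_h^{p,+}$, and mass-conservation comes from testing the first equation of \eqref{eq_scheme_second_order} with $v_h=1$. Your induction via the BDF mass remark (the coefficients of $A_k$ sum to $\alpha_k$) spells out what the paper leaves implicit, namely that the $k$ starting values share a common mass; for the postprocessed start this also needs the $L^2$-projection and the Runge--Kutta steps to preserve mass, since the postprocessing only enforces $\int_\Dom u_h^j=\int_\Dom \Tilde{u}_h^j$.
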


\begin{proof}
    The argument is identical to that of the fourth-order case once existence is assumed.
    Indeed, the bound-preservation follows from the fact that $u_h^{n+1}\in V_h^{p,+}$. By taking $v_h=1\in V_h^p$ in the first equation of \eqref{eq_scheme_second_order}, we derive the mass-conservation property.
\end{proof}

\section{Numerical experiments}\label{sec:numerics}
In this section, we assess the accuracy, structure-preserving properties, and robustness of the proposed schemes, and compare them with representative methods from the literature. For the fourth-order schemes \eqref{eq_scheme_stationary}, \eqref{eq_scheme_without_f}, and \eqref{eq_scheme}, the experiments are designed to validate the theoretical results established in the previous sections, including convergence behavior, bound-preservation, mass-conservation, and energy stability. For the regularized second-order scheme \eqref{eq_scheme_second_order}, whose analysis is more limited, the experiments are intended primarily to evaluate the effectiveness of the regularization strategy and its practical ability to preserve the desired structural properties.

Although the numerical schemes are established for model problems with mild assumptions like uniform ellipticity, and time-independent bounds $[a,b]$, we also consider tests which do not satisfy such mild assumptions, to illustrate the performance of the proposed schemes on challenging benchmarks.

All computations are carried out in the \texttt{Julia} programming language using the \texttt{Gridap.jl} library \cite{Verdugo_Badia_2022}. 

Unless otherwise specified, errors are measured in relative form; for example, the relative $L^2$-error is defined by $\norm{u-u_h}/\norm{u}$. Homogeneous Neumann boundary conditions are imposed in all tests unless stated otherwise. For tests without a priori information on bounds $[a,b]$, these bounds are determined by the numerical reference solutions with sufficiently small $h$ and $\tau$. The bound-preservation and mass-conservation are satisfied up to rounding error, hence we systematically decide to not display them. The iteration number needed for primal--dual active set method is uniformly bounded by $5$ in all tests, and are systematically omitted for brevity.

The section is organized into four parts, corresponding to the schemes \eqref{eq_scheme_stationary}, \eqref{eq_scheme_without_f}, \eqref{eq_scheme}, and \eqref{eq_scheme_second_order}, respectively.

\subsection{Stationary case}
In this section, we assess the performance of the proposed scheme \eqref{eq_scheme_stationary}. First, we test the accuracy of the proposed scheme on low-quality meshes. Then, problems with discontinuous data are used to illustrate the reliability of our scheme for non-smooth problems.

\paragraph*{2D smooth test on meshes with obtuse triangles.} 
To assess the accuracy of the proposed method on low-quality meshes, we consider
\begin{equation}\label{eq_test_stationary_smooth_2d}
    u=\DIV ((1+x)\nabla w)+f_1,\qquad w=-\Delta u,
\end{equation}
with analytic solution 
\begin{equation}
    u(x,y)=\cos(4\pi x) \cos(4\pi y),
\end{equation}
in the domain $(0,1)^2$.  $f_1$ is computed based on $u$ and $w$.

Here, we consider non-uniform meshes to introduce mesh irregularity and test robustness with respect to mesh quality. Each mesh is constructed as follows: We start with a uniform mesh $\calT_h$ composed of isosceles right triangles, then we perturb each interior node of the domain by a random vector in the ball $B(0,0.2h)$. 

In Figure \ref{fig:accuracy_stationary_obtuse}, we illustrate an example of $20$ nodes in each direction in the left panel, while the middle and right panels display the convergence curves with $p\in\{1,2,3,4\}$ in the $L^2$- and $H^1$-norms in the middle and right panel.

\begin{figure}[htb]
    \centering
    \begin{subfigure}{0.3\textwidth}
        \includegraphics[width=\textwidth]{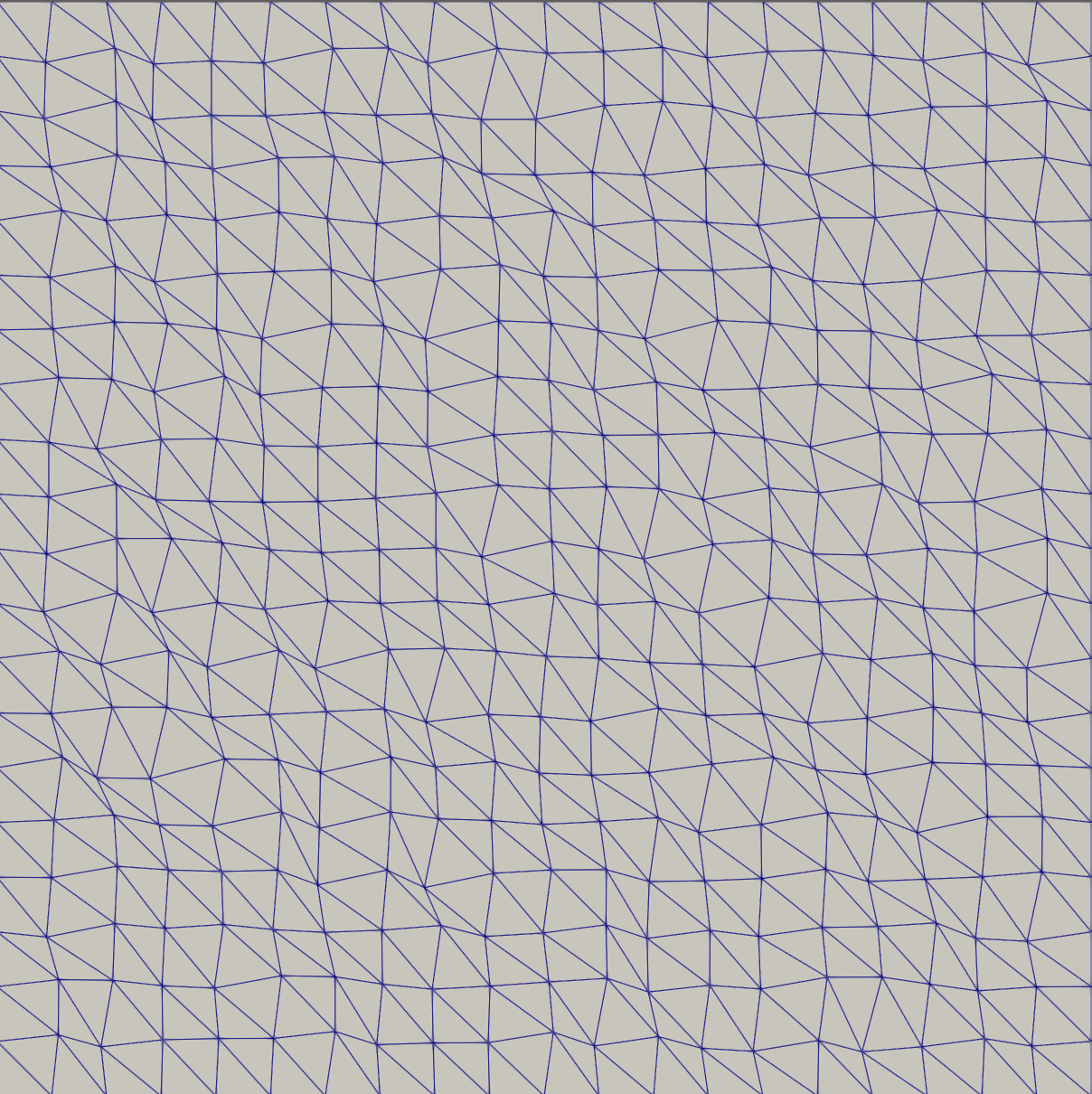}
        \caption{Mesh with obtuse triangles.}
    \end{subfigure}
    \begin{subfigure}{0.3\textwidth}
        \includegraphics[width=\textwidth]{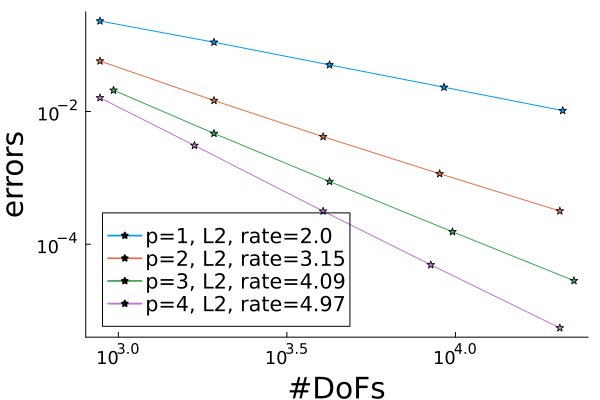}
        \caption{Convergence test, $L^2$-norm.}
    \end{subfigure}
    \begin{subfigure}{0.3\textwidth}
        \includegraphics[width=\textwidth]{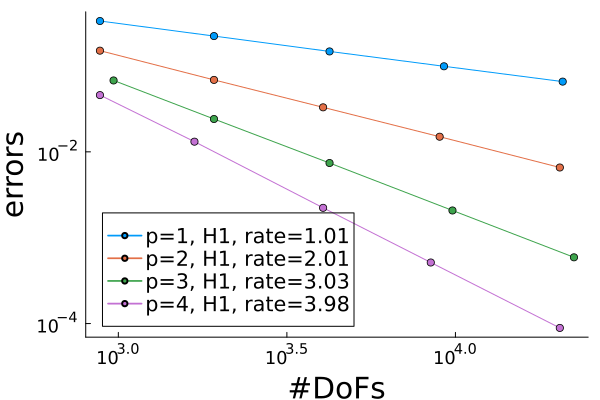}
        \caption{Convergence test, $H^1$-norm.}
    \end{subfigure}
    \caption{Accuracy test on smooth solution of \eqref{eq_test_stationary_smooth_2d} on non-uniform meshes.}
    \label{fig:accuracy_stationary_obtuse}
\end{figure}
From Figure \ref{fig:accuracy_stationary_obtuse}, we observe the expected convergence rates in both $L^2$- and $H^1$-norms, for bound-preserving and mass-conservative numerical solutions. In addition, several cells in the mesh have large angles, yet the observed convergence rates are unaffected by this poor mesh quality.

\paragraph*{1D discontinuous data test.} Here we assess the stability and accuracy of the proposed scheme with discontinuous data. We set $\Dom=(-2,2)$,
\begin{equation}\label{eq_test_stationary_discontinuous_1d}
    u=\DIV(M\nabla w) +f_1, \qquad w=-\Delta u + f_2,
\end{equation}
with
\begin{equation}
    M(x)=\begin{cases}
        2,\quad \text{if } |x|<0.5,\\
        \epsilon,\quad \text{otherwise},
    \end{cases}\quad 
     f_1(x)=\begin{cases}
        1,\quad \text{if } 0<x<1,\\
        0,\quad \text{otherwise},
    \end{cases}\quad 
    f_2(x)=\begin{cases}
        5,\quad \text{if } -1<x<0,\\
        0,\quad \text{otherwise},
    \end{cases}
\end{equation}
and $\epsilon>0$. 

In Figure \ref{fig:stationary_discontinuous}, we present the reference solutions produced by $\mathbb{P}_3$-FEM on $5000$ cells with various $\epsilon$ in the left panel; the convergence test is illustrated in the right panel, by comparing the numerical solutions with the reference solutions, for $p=1$, $\epsilon\in\{10^{-1},10^{-3},10^{-5},10^{-7}\}$, on uniform meshes. 

\begin{figure}
    \centering
    \begin{subfigure}{0.4\textwidth}
        \includegraphics[width=\textwidth]{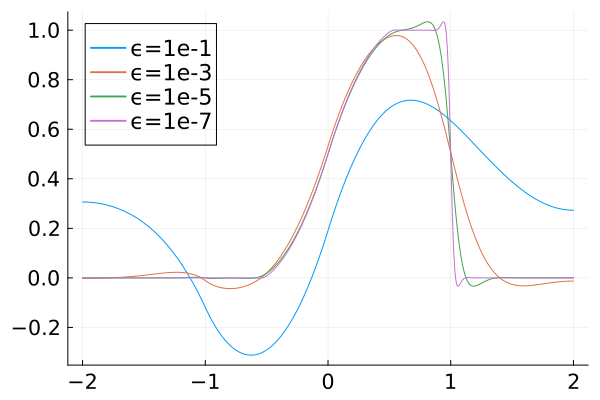}
        \caption{Reference solutions.}
    \end{subfigure}
    \begin{subfigure}{0.4\textwidth}
        \includegraphics[width=\textwidth]{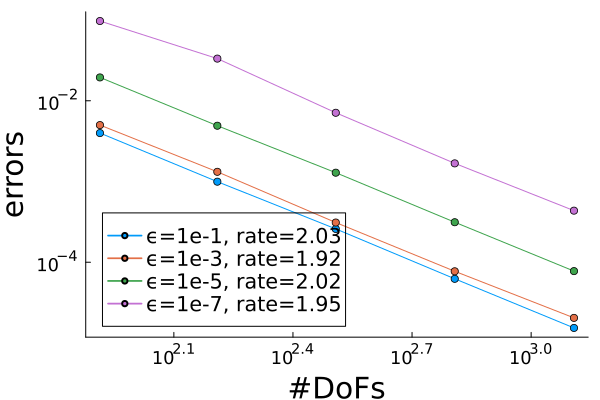}
        \caption{Convergence rate in the $L^2$-norm, $p=1$.}
    \end{subfigure}
    \caption{Accuracy tests on \eqref{eq_test_stationary_discontinuous_1d} with discontinuous data.}
    \label{fig:stationary_discontinuous}
\end{figure}

From Figure \ref{fig:stationary_discontinuous}, we observe the expected convergence behavior in all settings, which demonstrates the stability and accuracy of the proposed scheme on the problems with discontinuous data and local small mobility. Moreover, under the same discretization setting, smaller values of $\epsilon$ lead to larger errors, which is consistent with the fact that strongly contrasting coefficients require finer meshes to resolve.

\subsection{Fourth-order parabolic problem without potential term: lubrication-type equation}
In this section, we test the proposed scheme \eqref{eq_scheme_without_f} on the well-known lubrication-type equations, \ie $M(v)=v^\beta$ with $\beta>0$ and $f(v)=0$. As stated in \cite{Cheng_Shen_2022_1}, this is a challenging test for $\beta=\frac{1}{2}$, owing to the following reasons: 1. the given problem may have finite temporal singularity, which in general requires a regularization of $M$; 2. the negative value of $u$ is not allowed, since in the mobility $M$, we cannot take the square root of a negative value in $\Real$; 3. Standard bound-preserving techniques such as cut-off do not guarantee an energy stable solution; 4. a relaxation on the lower bound $0$ is needed in practice for bound-preserving schemes, \eg for $\dt=10^{-4}$, the lower bound cannot be smaller than $10^{-2}$ in \cite{Cheng_Shen_2022_1}.

We assess the accuracy of the proposed scheme on a manufactured solution, in the first part. In the second part, we evaluate the performance of proposed method on a challenging singular test, without any additional modification of the equation or any substantial relaxation of the lower bound.

\paragraph*{Accuracy test on smooth solution.} Here we assess the accuracy of the proposed scheme by considering the equation
\begin{equation}\label{eq_lubrication_accuracy}
    \partial_t u = \DIV(u\nabla w) + f_1,\quad w=-\Delta u,
\end{equation}
with the exact solution
\begin{equation}
    u(x,y,t)=(1+\cos(x)\cos(y))\cos(t),
\end{equation}
in the domain $(0,2\pi)^2$ with final time $T=1$. The source term $f_1$ is constructed from $u$ and $w$. The bounds are given by $a(t)=0$ and $b(t)=2\cos(t)$, and are therefore time-dependent.

In Figure \ref{fig:accuracy_Lubrication}, we present the convergence test on structured meshes with $\dt=\frac{h}{2}$, $p\in\{1,2,3,4\}$, and $k=p+1$, that is, using a BDF$p+1$ discretization in time.

\begin{figure}[htb]
    \centering
    \includegraphics[width=0.5\linewidth]{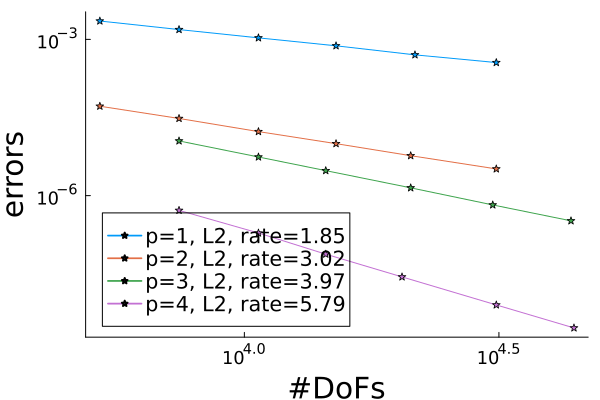}
    \caption{Accuracy test for the lubrication-type equation \eqref{eq_lubrication_accuracy}.}
    \label{fig:accuracy_Lubrication}
\end{figure}

We observe from Figure \ref{fig:accuracy_Lubrication} that the expected convergence rate in the $L^2$-norm at the final time for $p\in\{1,2,3\}$ are obtained, and there is even a slight superconvergence for $p=4$. Therefore, the numerical results confirm the expected convergence behavior of the proposed scheme.

\paragraph{Singular test with $\beta=0.5$.} Here we test the stability, efficiency and the structure-preservation of the proposed scheme, by considering
\begin{equation}\label{eq_lubrication_singular}
    \partial_t u = \DIV(\sqrt{u}\nabla w),\quad w=-\Delta u,
\end{equation}
with the initial condition 
\begin{equation}
    u_0(x)= 0.8-\cos(\pi x) + 0.25 \cos(2\pi x),
\end{equation}
in the domain $\Dom=(-1,1)$, and the homogeneous Neumann boundary condition, as in \cite[eq. (4.28)]{Cheng_Shen_2022_1}. We note that the exact solution exhibits a singular behavior at $t\approx 7.4\times 10^{-4}$.

In Figure \ref{fig:Lubrication_1D}, we illustrate the numerical result for this model problem. The simulations are made with $50$ spatial nodes, $p=1$, $k=1$, and various $\dt\in\{10^{-2},10^{-3},10^{-4},10^{-5}\}$, with lower bound $0$ relaxed by the tolerance $10^{-15}$. In the left panel, we present the computed numerical solutions at different times with $\dt=10^{-4}$; in the middle panel, the energy evolution is illustrated for $\dt\in \{ 10^{-2},10^{-3},10^{-4},10^{-5} \}$; in the right panel, the iteration needed in each time step is presented for $\dt\in \{ 10^{-3},10^{-4}, 10^{-5} \}$, for $t\in[0,10^{-2}]$. In the right panel, the evolution of iteration number for $t>10^{-2}$ and for the case $\dt=10^{-2}$ are omitted, since they are identically zero.

\begin{figure}[htb]
    \begin{subfigure}{0.3\textwidth}
        \includegraphics[width=\textwidth]{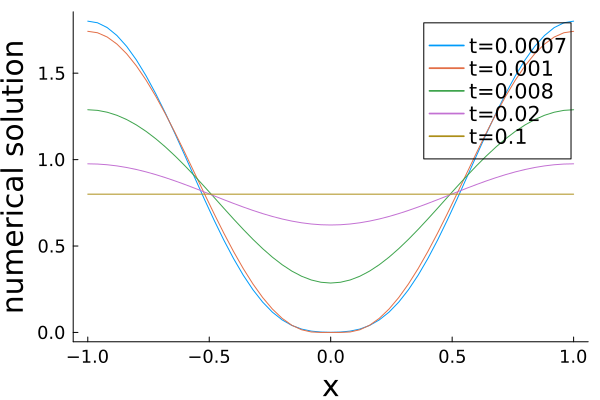}
        \caption{Numerical solution, $\dt=10^{-4}$.}
    \end{subfigure}
    \begin{subfigure}{0.3\textwidth}
        \includegraphics[width=\textwidth]{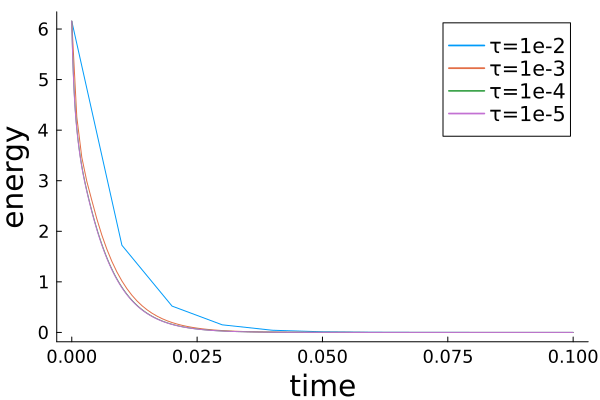}
        \caption{Energy evolution for different $\dt$.}
    \end{subfigure}
    \begin{subfigure}{0.3\textwidth}
        \includegraphics[width=\textwidth]{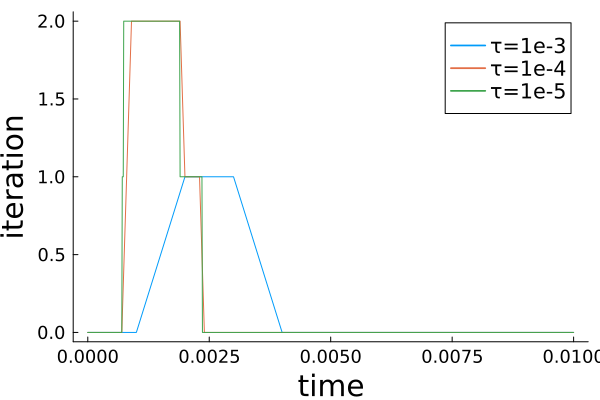}
        \caption{Iteration number for different $\dt$.}
    \end{subfigure}
    \caption{Simulation for temporal singular lubrication-type equation \eqref{eq_lubrication_singular}.}
    \label{fig:Lubrication_1D}
\end{figure}

We observe from Figure \ref{fig:Lubrication_1D} that: 1. the numerical solutions are in good agreement with the reference solutions in \cite{Cheng_Shen_2022_1,Zhornitskaya_Bertozzi_2000}, and the temporal singularity at $t\approx 7.4\times 10^{-4}$ does not cause difficulties for any tested value of $\dt$, even we did not use any regularization on the mobility $M$ or restrictive relaxation on the lower bound; 2. the energy dissipation is observed for entire range of $\dt$, even for $\dt=10^{-2}$, which illustrates the stability of the proposed scheme; 3. the iterative solver for the variational inequality is activated only near the temporal singularity, and the iteration number remains uniformly small (at most 2 in our tests).
These observations demonstrate the stability, efficiency and accuracy of the proposed scheme.

\subsection{General fourth-order parabolic problem: Cahn--Hilliard equation}

Here we consider the Cahn--Hilliard equation and its variants, to illustrate the numerical performance of our scheme \eqref{eq_scheme}. We consider two numerical tests in this section. The first one is an accuracy test on a manufactured smooth solution with nonlinear mobility and polynomial potential, and the second one is a challenging simulation considered in \cite{Cheng_Shen_2022}, with logarithmic potential. 

\paragraph*{Accuracy test on smooth solution.} Here we test the accuracy of the proposed method in 
\begin{equation}\label{eq_CH_accuracy}
    \partial_t u = \DIV ((1-u^2)\nabla w) + f_1, \quad w= -\Delta u + \frac{1}{4}(u^2-1)^2,
\end{equation}
known as the Cahn--Hilliard equation with nonlinear mobility and polynomial potential. We consider the exact solution
\begin{equation}
    u(x,y,t)=\cos(x)\cos(y)\cos(t),
\end{equation}
and $f_1$ is determined by $u$. The domain is $(0,2\pi)^2$ and the final time is $T=1$. The bounds of the above problem are given by $a(t)=-\cos(t),b(t)=\cos(t)$. 

In Figure \ref{fig:accuracy_CH}, we illustrate the error in the $L^2$-norm at the final time, with $C_0=1$, $\dt=\frac{h}{2}$,  $p\in\{1,2,3,4\}$ and $k=p+1$ (\ie BDF$p+1$ in time and $p$-th order FEM in space), on structured meshes. 

\begin{figure}[htb]
    \centering
    \includegraphics[width=0.5\linewidth]{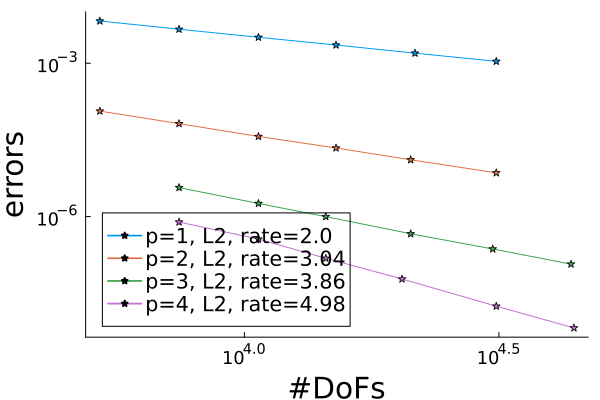}
    \caption{Accuracy test for the Cahn--Hilliard equation \eqref{eq_CH_accuracy}.}
    \label{fig:accuracy_CH}
\end{figure}

We observe from Figure \ref{fig:accuracy_CH} the expected convergence rate in the $L^2$-norm at the final time for $p\in\{1,2,3,4\}$. This confirms the accuracy of the proposed scheme. The convergence behavior of \eqref{eq_scheme} is similar to the potential-free case \eqref{eq_scheme_without_f}. This indicates that the introduction of the SAV technique does not degrade the accuracy of the BDF-type discretization, while providing additional modified energy stability.

\paragraph*{Logarithmic potential case.}
Here we assess the stability and the structure-preservation of the proposed method by testing
\begin{equation}\label{eq_CH_log_BP}
    \partial_t u = \DIV((1-u^2)\nabla w) , \quad w= -0.01\Delta u + \log(1+u) -\log(1-u)-5u,
\end{equation}
with the initial condition 
\begin{equation}
    u_0(x,y) = 0.2 +0.05\mathrm{rand}(-1,1),
\end{equation}
in the domain $(0,1)^2$ and the final time $T=1.5\times 10^{-1}$.
We note that the values of the solution should stay in $(-1,1)$, and the mobility tends to zero for $u$ taking values near $-1$ and $1$.

We set $C_0=1$ in the potential energy \eqref{eq:energy}, use $p=1$, $50$ nodes in each direction for spatial discretization, and set $k=2$ \ie BDF2 in the scheme \eqref{eq_scheme}. The upper and lower bounds are relaxed by $10^{-10}$. 

In Figure \ref{fig:CH_log_mobility_date}, we present several diagnostics for the proposed scheme.
Panel \ref{fig_CH_a} compares the energy evolution conducted with $\dt=10^{-4}$, for the original energy delivered by 1). our scheme \eqref{eq_scheme}, marked as `variational inequality + SAV' in the figure; 2). the variational inequality technique without SAV as \eqref{eq_scheme} with $r^n=0$ for $n\in\calN$, marked as `variational inequality' in the figure; 3). bound-preserving and mass-conservative Lagrange multiplier technique in \cite{Cheng_Shen_2022_1}, marked as `Lagrange multiplier' in the figure. 
Panel \ref{fig_CH_b} shows the evolution of the maximum and minimum values of the numerical solution, together with a reference solution computed with a smaller time step.
Panel \ref{fig_CH_c} illustrates the mass difference $\int_\Dom u_h^n-\int_\Dom u_h^0$, demonstrating mass-conservation, with $\dt=10^{-4}$; 
Panel \ref{fig_CH_d} reports the number of iterations required by the primal–dual active set solver at each time step for $\dt\in \{10^{-4}, 5\times 10^{-5}, 2\times 10^{-5}\}$, for the proposed scheme.




\begin{figure}[htb]
    \centering
    \begin{subfigure}{0.4\textwidth}
        \includegraphics[width=\textwidth]{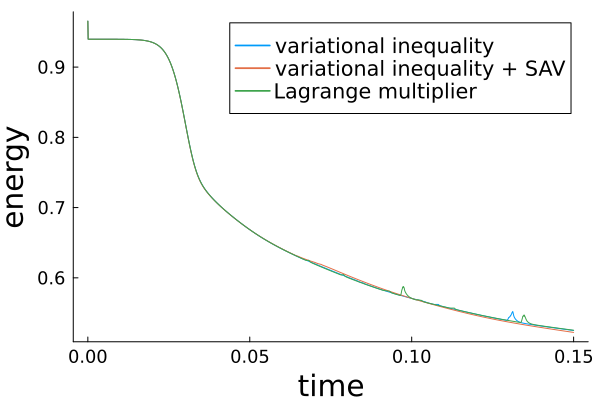}
        \caption{Comparison of energy evolution, $\dt=10^{-4}$.}
        \label{fig_CH_a}
    \end{subfigure}
    \begin{subfigure}{0.4\textwidth}
        \includegraphics[width=\textwidth]{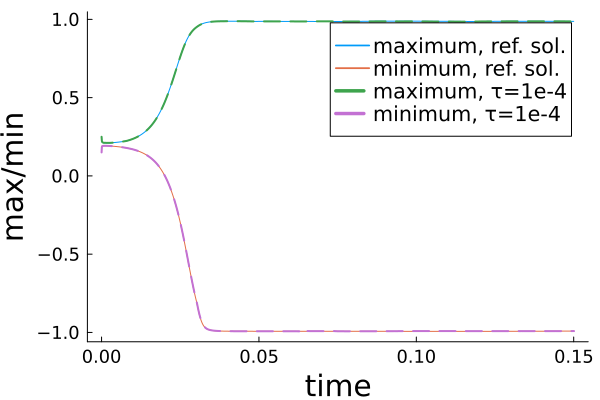}
        \caption{Maximum and minimum of $u_h^n$.}
        \label{fig_CH_b}
    \end{subfigure}
    
    \begin{subfigure}{0.4\textwidth}
        \includegraphics[width=\textwidth]{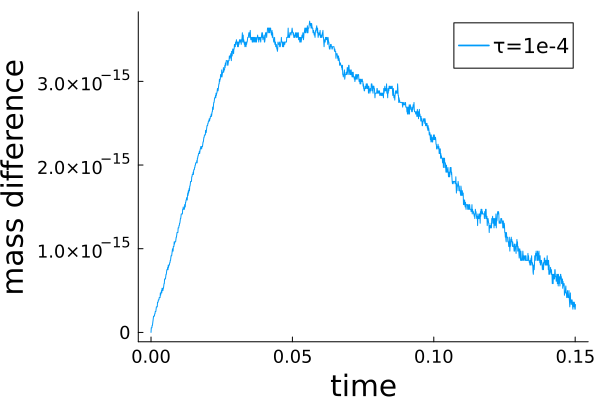}
        \caption{Mass-conservation of $u_h^n$.}
        \label{fig_CH_c}
    \end{subfigure}
    \begin{subfigure}{0.4\textwidth}
        \includegraphics[width=\textwidth]{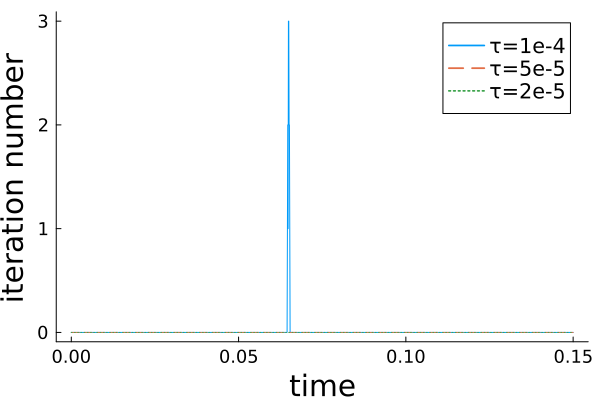}
        \caption{Comparison of iteration number.}
        \label{fig_CH_d}
    \end{subfigure}
    
    \caption{Computational data of Cahn--Hilliard equation with non-constant mobility and logarithmic potential \eqref{eq_CH_log_BP}.}
    \label{fig:CH_log_mobility_date}
\end{figure}
From Figure \ref{fig:CH_log_mobility_date}, we observe that: 1. the maximum and minimum values of the computed solution agree well with those of the reference solution, and the numerical solution remains in $(-1,1)$;
2. by taking $\dt=10^{-4}$, the energy produced by the proposed method does not suffer from the non-physical oscillation, whereas the other two approaches produce oscillation at $t\approx 10^{-1}$ and $t\approx 1.3\times 10^{-1}$, where the standard FEM solution leaves the range $(-1,1)$; 
3. the iterative solver is only activated in a few time steps for $\dt=10^{-4}$, and never activated for smaller $\dt$, which means that the proposed method has almost the same computational cost of the standard approaches, with well-preserved fundamental physical properties;
4. mass-conservation is well preserved by the proposed scheme, up to accumulated rounding errors.
These observations demonstrate the stability and the efficiency of the proposed scheme, and highlight the role of the SAV technique in enhancing energy stability.

\subsection{Second-order parabolic problem}
In this section, we assess the accuracy of the proposed scheme \eqref{eq_scheme_second_order} first, then test it on a nonsmooth stationary case considered in \cite{Kirby_Shapero_2024,Barrenechea_Georgoulis_Pryer_2023}. Next, we consider the porous medium equations with analytic solution in the Barenblatt form \cite{Vazquez_2007}, known as a challenging test owing to the presence of vacuum regions and low regularity. 

\paragraph*{Accuracy test on smooth solution.} Here we consider the equation
\begin{equation}\label{eq_second_accuracy}
    \partial_t u = \DIV((1+u)\nabla u) + f_1,
\end{equation}
with the exact solution
\begin{equation}
    u(x,y,t)=\Big(\cos(x)\cos(y) - \frac{3}{8\pi}x^4 +x^3 \Big)\cos(t),
\end{equation}
and $f_1$ constructed based on $u$, in the domain $(0,2\pi)^2$ and $T=1$. The bounds of the above problem are given by $a(t)=-\cos(t),b(t)=(1+2\pi^3)\cos(t)$. In this problem, the Neumann boundary condition for $u$ is zero, but for $w\eqq-\sqrt{\epsilon}\Delta u$ is nonzero.

In Figure \ref{fig:accuracy_parabolic}, we illustrate the convergence test with $\dt=\frac{h}{2}$,  $p\in\{1,2,3,4\}$ and $k=p+1$, \ie we consider BDF$p+1$ in time. In addition, we artificially impose the homogeneous Neumann boundary condition for $w_h$, although the corresponding exact boundary condition is nonzero.

\begin{figure}[htb]
    \centering
    \includegraphics[width=0.5\linewidth]{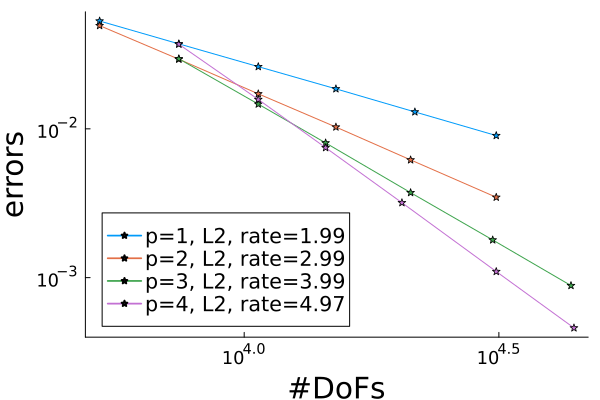}
    \caption{Accuracy test for the second-order parabolic equation \eqref{eq_second_accuracy}.}
    \label{fig:accuracy_parabolic}
\end{figure}

We observe from Figure \ref{fig:accuracy_parabolic} that the expected convergence rate in the $L^2$-norm at the final time for $p\in\{1,2,3,4\}$ are obtained. Thus, the numerical evidence indicates that the perturbation induced by the small fourth-order elliptic operator does not affect the observed convergence order, even though there is a small mismatch on the boundary condition for $w$.

\paragraph*{2D stationary positivity test.} We consider here a benchmark test used in \cite{Kirby_Shapero_2024}. The objective of this test is to compare the proposed method \eqref{eq_scheme_second_order} with the standard FEM. We consider the following equation
\begin{equation}\label{eq_second_BP}
    -\DIV( K(x,y)\nabla u ) =f_1, 
\end{equation}
with 
\begin{equation}
    K(x,y)=\begin{bmatrix}
        x^2+10^{-4}y^2 & -(1-10^{-4})xy \\
        -(1-10^{-4})xy & y^2+10^{-4}x^2 
        \end{bmatrix}, \qquad f_1(x,y)=\begin{cases}
            1, \text{ if } |x-\frac{1}{2}|\leq \frac{1}{8},\\
            0, \text{ otherwise,}
        \end{cases}
\end{equation}
and the homogeneous Dirichlet boundary conditions for $u$. Since $f_1$ is nonnegative, the solution $u$ is expected to be nonnegative. To solve such PDE, we perturb it by adding $\epsilon\Delta^2 u$ in the PDE to get
\begin{equation*}
    -\DIV( K(x,y)\nabla u ) -\sqrt{\epsilon} \Delta w= f_1,\qquad w= \sqrt{\epsilon} \Delta u, 
\end{equation*}
with the homogeneous Dirichlet boundary conditions for $u$ and $w$, and $\epsilon=h^{p+1}$. The perturbed problem is then discretized using the proposed scheme \eqref{eq_scheme_second_order}. The Dirichlet boundary conditions are strongly imposed by replacing $V_h^p$ and $V_h^{p,+}$ in the scheme by their counterpart with homogeneous Dirichlet boundary conditions.

We compare the proposed scheme with the standard FEM approximation. We construct three numerical solutions here for comparison: the reference solution $u_{h,ref}$ is constructed by standard $\mathbb{P}_1$-FEM with $200$ nodes in each direction; the FEM solution $u_{h,FEM}$ is delivered by $\mathbb{P}_1$-FEM with $15$ nodes in each direction; the solution $u_{h,VI}$ is delivered by our scheme with $p=1$ and $15$ nodes in each direction.
In Figure \ref{fig:laplace_2d_osc},  we illustrate the reference solution $u_{h,ref}$ in the left panel; in the middle panel and right panel, we present the pointwise numerical error $u_{h_ref}-u_{h,FEM}$ and $u_{h_ref}-u_{h,VI}$, respectively. 

\begin{figure}[htb]
    \begin{subfigure}{0.3\textwidth}
        \includegraphics[width=\textwidth]{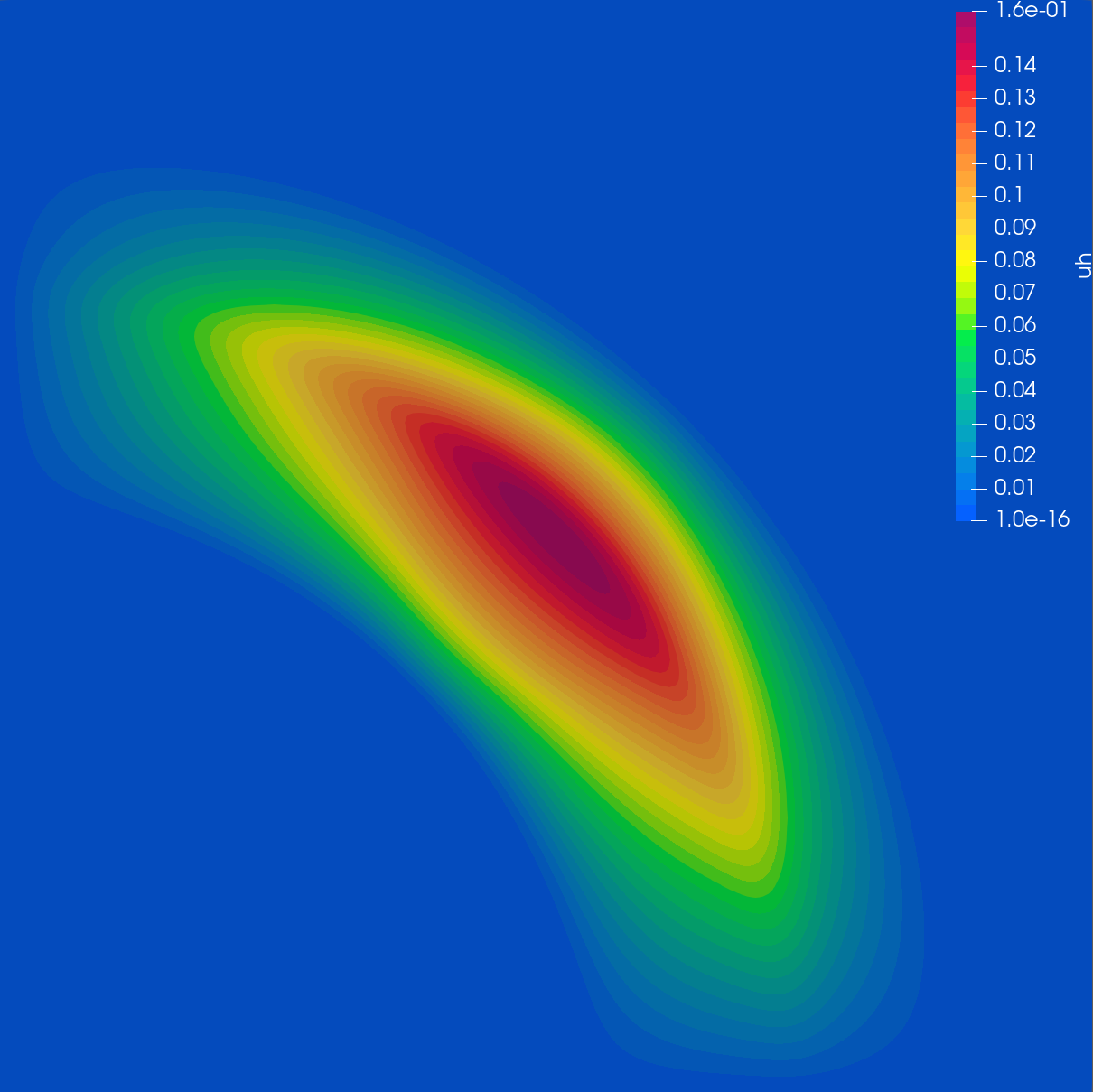}
        \caption{Reference solution.}
    \end{subfigure}
    \begin{subfigure}{0.3\textwidth}
        \includegraphics[width=\textwidth]{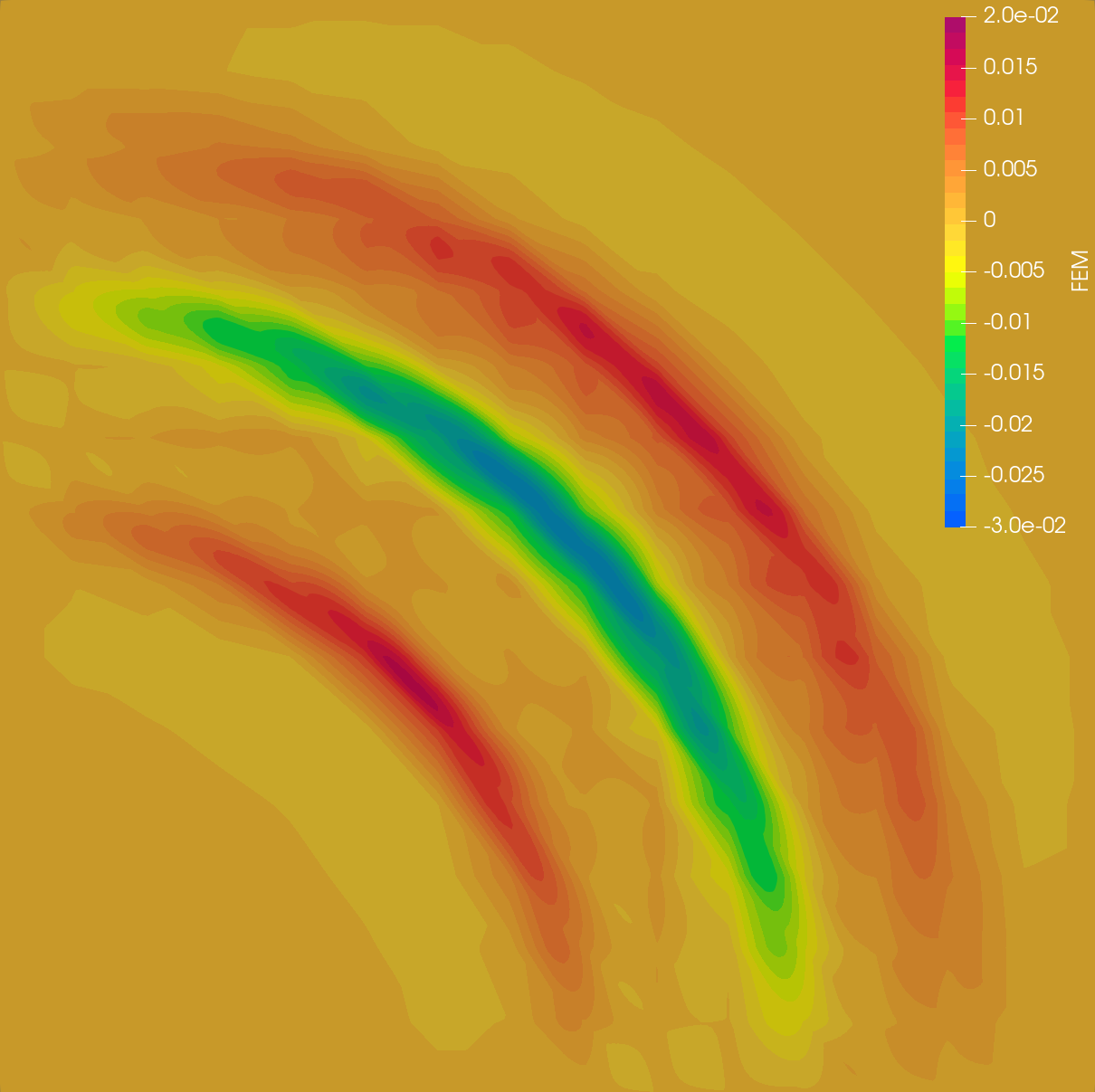}
        \caption{Error for standard FEM.}
    \end{subfigure}
    \begin{subfigure}{0.3\textwidth}
        \includegraphics[width=\textwidth]{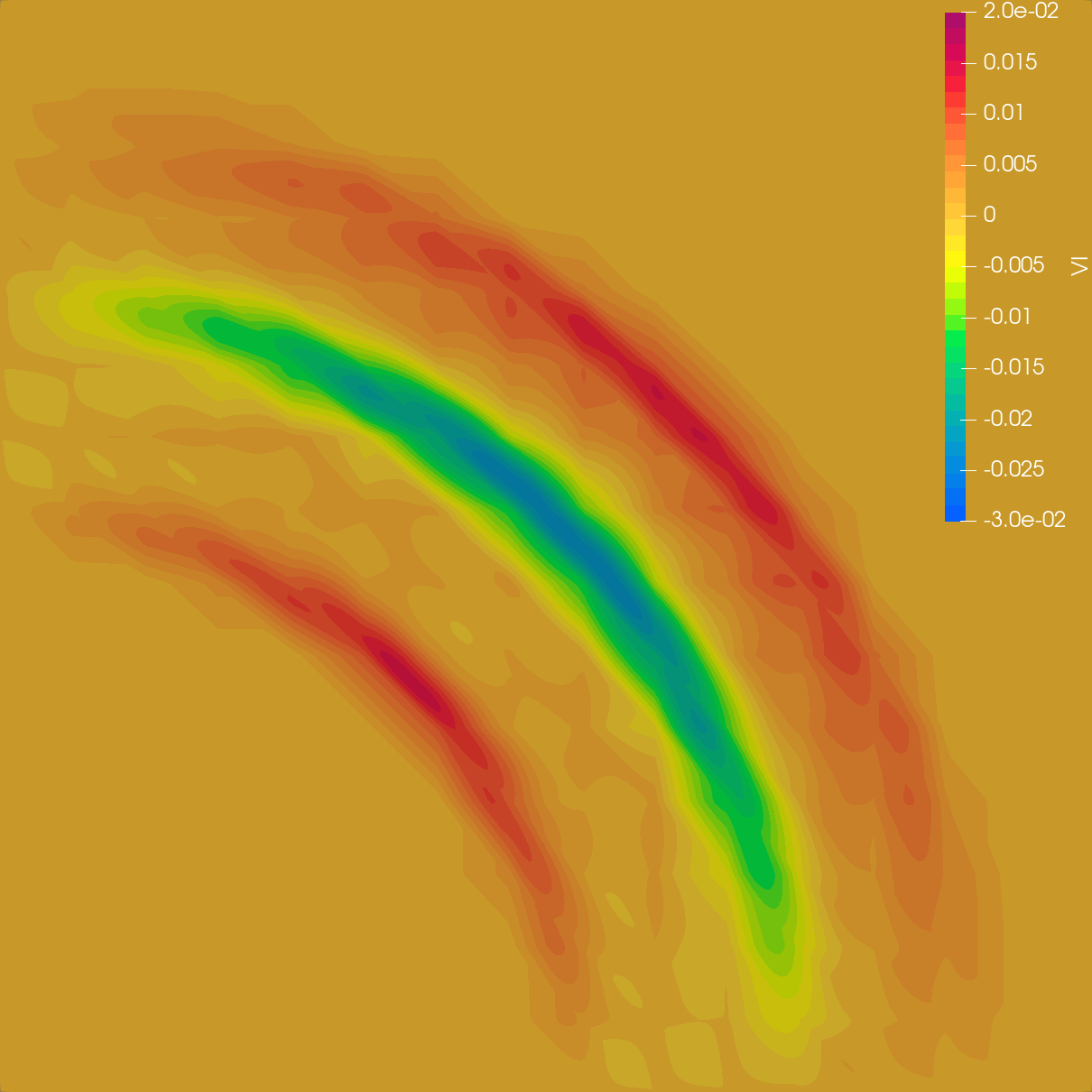}
        \caption{Error for our scheme.}
    \end{subfigure}
    \caption{Illustration of the numerical solutions for stationary second-order parabolic problem \eqref{eq_second_BP}.}
    \label{fig:laplace_2d_osc}
\end{figure}

From Figure \ref{fig:laplace_2d_osc}, we observe that the proposed method successfully removes the negative part of the numerical solution, which indicates the advantage of the proposed scheme compared with the standard FEM approximation. This behavior is similar to that observed for other variational inequality based FEM schemes in \cite{Kirby_Shapero_2024,Barrenechea_Georgoulis_Pryer_2023}.

\paragraph{Nonsmooth test on porous medium equation.} Here we consider a singular test \cite{Vazquez_2007} to illustrate the stability and accuracy of the proposed method:
\begin{equation}\label{eq_porous_medium}
    \partial_t u = m \DIV( u^{m-1} \nabla u ),
\end{equation}
in the domain $(-5,5)$, $m\geq 1$, $T=1.2$, with homogeneous Neumann boundary conditions. The exact solution of the porous medium equation in the Barenblatt form is
\begin{equation}
    u(x,t)=\frac{1}{t_0^\alpha} \max(0, 1-\alpha \frac{m-1}{2m}\frac{x^2}{t_0^{2\alpha}})^\frac{1}{m-1},
\end{equation}
where $t_0=t+1$ and $\alpha=\frac{1}{m+1}$. This solution is compactly supported with the interface moving outward in a finite speed. The main challenges in the numerical simulation of this equation are the limited regularity of the solution and the requirement of positivity. Indeed, for all $t\in [0,T]$, $u(\cdot,t)\in H^2(\Dom)$ for $m \leq 2$; $u(\cdot,t)\in H^1(\Dom)$ for $m < 3$; while for $m>3$, the solution $u(\cdot,t)\in H^s(\Dom)$, $s<\frac{1}{2}+\frac{1}{m-1}$. In addition, the diffusion term may be negative when $u_h<0$. 

In Figure \ref{fig:porous_medium}, we assess accuracy, structure-preserving properties, and robustness for the porous medium equation, and compare the proposed scheme with representative methods from the literature.
We discretize the PDE with $p=1$, $k=2$ and $\dt=\frac{h}{10}$, on uniform meshes. In the left panel, we illustrate the numerical solution delivered by the minimization postprocessing \cite{Liu_Riviere_Shen_Zhang_2024}, the bound-preserving and mass-conservative Lagrange multiplier method \cite{Cheng_Shen_2022_1}, a bound-preserving method \cite{Amiri_Barrenechea_Pryer_2025} and our method, marked as 'optimization', 'Lagrange multiplier', 'bound-preserving' and 'variational inequality' in the figure, respectively, at the final time, with $m=6$ and $400$ nodes in space; in the middle panel, we compare the convergence behavior of the bound-preserving scheme, the bound-preserving and mass-conservative Lagrange multiplier method and our method for $m=6$, in the $L^2$-norm at the final time; in the right panel, the convergence behavior of the proposed method is tested for $m\in\{2,3,4,5,6\}$ in the $L^2$-norm at the final time.

\begin{figure}
    \centering
    \begin{subfigure}{0.3\textwidth}
        \includegraphics[width=\textwidth]{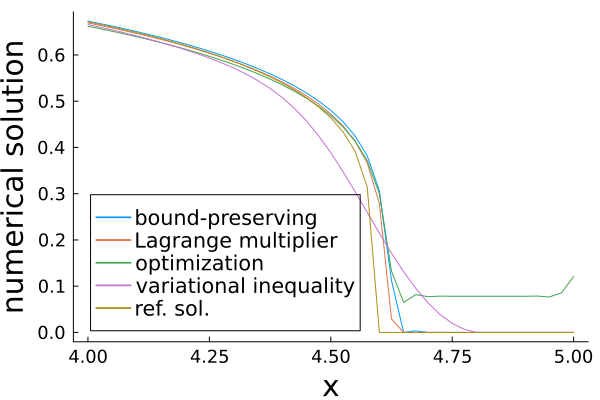}
        \caption{Numerical solution produced by different methods, $m=6$.}
    \end{subfigure}
    \begin{subfigure}{0.3\textwidth}
        \includegraphics[width=\textwidth]{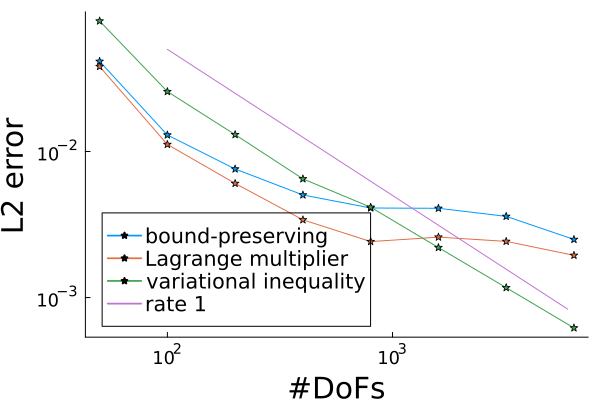}
        \caption{Convergence test for different methods, $m=6$.}
    \end{subfigure}
    \begin{subfigure}{0.3\textwidth}
        \includegraphics[width=\textwidth]{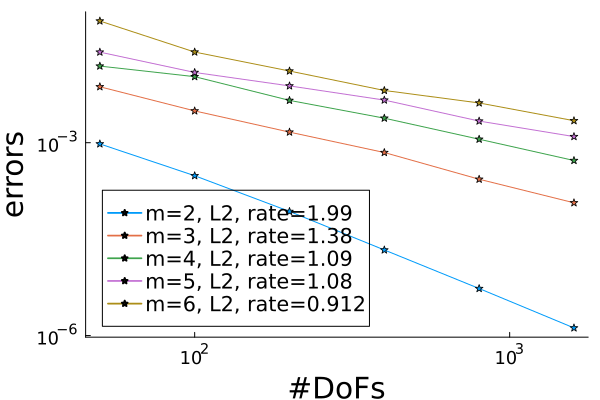}
        \caption{Convergence test for the proposed method.}
    \end{subfigure}

    \caption{Numerical illustration for the porous medium equation in the Barenblatt form \eqref{eq_porous_medium}.}
    \label{fig:porous_medium}
\end{figure}

We have the following observations from Figure \ref{fig:porous_medium}: 1. for $m=6$ and $\dt=\frac{h}{10}=2.5\times 10^{-3}$, the optimization-based approach fails to produce an accurate solution under this discretization setting, the proposed method produces a mollified solution satisfying the positivity constraint, the Lagrange multiplier technique and bound-preserving method produce the most accurate solutions, which suggests using these two methods in this discrete setting; 2. although Lagrange multiplier and bound-preserving methods deliver a relatively more accurate solution with coarse meshes and large time step, they fail to exhibit linear convergence when we continue to refine mesh and reduce time step, for $m=6$. This suggests using the proposed method, which exhibits linear convergence across all tested mesh sizes and time steps; 3. The proposed method has second-order convergence for $m=2$ and first-order convergence for $m\geq 3$. Moreover, increasing $m$ leads to larger errors under the same discrete setting, which is consistent with the fact that the solution becomes increasingly singular and harder to resolve as $m$ increases. Overall, the proposed scheme demonstrates competitive performance compared with existing methods, particularly in maintaining stability and convergence across challenging regimes.

\section{Concluding remarks}
\label{sec:conclusions}

In this work, we developed a variational inequality-based framework for a broad class of fourth-order elliptic and parabolic problems. For linear fourth-order elliptic equations, the proposed approach yields finite element schemes that are bound-preserving and mass-conservative, and that admit an equivalent strictly convex minimization formulation. This structure allows us to establish well-posedness and derive an optimal error estimate in the $H^1$-seminorm for sufficiently smooth solutions. It also enables an efficient implementation via primal--dual active set methods.

We then extended the framework to nonlinear fourth-order parabolic problems by coupling the variational inequality formulation with BDF temporal discretizations and SAV stabilization. The resulting schemes are designed to achieve high-order accuracy in both space and time, while rigorously preserving bounds and mass. In addition, its first-order version is energy stable with a modified energy. Numerical experiments confirm that these schemes are robust and efficient on several challenging benchmark problems, including lubrication-type equations and Cahn--Hilliard models with nonlinear mobility and logarithmic potentials.

As a further application, we proposed a consistent fourth-order regularization strategy for nonlinear second-order parabolic problems and applied the same variational inequality--BDF framework. The resulting method enforces bound-preservation and mass-conservation. Extensive numerical tests, including to problems with low regularity and moving interfaces such as porous medium equations in Barenblatt form, indicated that they were highly stable and accurate in practice. However, a complete well-posedness theory for this regularized second-order setting remains an open problem.

Compared with existing approaches, the present framework addresses several gaps in the literature:
\begin{itemize}
  \item it provides high-order spatial schemes for fourth-order elliptic problems that are simultaneously nodally bound-preserving and mass-conservative, with a rigorous $H^1$-error analysis;
  \item it yields space--time high-order, bound-preservation and mass-conservation for nonlinear fourth-order parabolic problems, and the energy stability is established for the first-order temporal discretization;
  \item it offers a practical high-order strategy for nonlinear second-order parabolic problems that enforces bounds and mass-conservation.
\end{itemize}

  Error analysis in the $L^2$-norm for the elliptic scheme \eqref{eq_scheme_stationary}, rigorous convergence analysis for parabolic schemes \eqref{eq_scheme}, \eqref{eq_scheme_without_f} and \eqref{eq_scheme_second_order}, as well as extensions to more complex systems such as coupled multiphysics models and higher-order PDEs, will be considered in future work.

\paragraph{Acknowledgements.}
This work is supported in part by National Natural Science Foundation of China grants 12371409 and 12501555.

\bibliographystyle{abbrvnat} 
\bibliography{bib}

\end{document}